\newcommand{\be}{\begin{equation}}
\newcommand{\ee}{\end{equation}}
\newcommand{\wt}{\widetilde}
\newcommand{\wh}{\widehat}
\newcommand{\de}{\delta}
\newcommand{\De}{\Delta}
\newcommand{\al}{\alpha}
\newcommand{\bt}{\beta}
\newcommand{\si}{\sigma}
\newcommand{\om}{\omega}
\newcommand{\lb}{\lambda}
\newcommand{\ze}{\zeta}
\renewcommand{\th}{\theta}
\newcommand{\ep}{\varepsilon }
    \def\tr{{\rm tr \,}}
    \def\P2n{{\rm P}_{{\rm II}}^{(n)}}
\def\T {\mathbb{T}}
    \newtheorem{theorem}{Theorem}[section]
    \newtheorem{lemma}[theorem]{Lemma}
    \newtheorem{Remark}[theorem]{Remark}
    \newenvironment{remark}{\begin{Remark}\rm}{\end{Remark}}
    \newtheorem{Example}[theorem]{Example}
    \newtheorem{Assumptions}[theorem]{Assumptions}
    \newenvironment{proof}%
    {\rm \trivlist \item[\hskip \labelsep{\it Proof. }]}%
    {\hspace*{\fill}$\Box$\endtrivlist}
    \newcommand{\supp}{{\operatorname{supp}}}
\newcommand{\bbz}{\mathbb{Z}}
\newcommand{\bbr}{\mathbb{R}}
\newcommand{\bbc}{\mathbb{C}}
\newcommand{\la}{\label}
\newcommand{\ci}{\cite}
\newcommand{\bi}{\bibitem}
\newcommand{\meas}{\mbox{meas}}
\newcommand{\dimH}{\mathrm{dim_H}}
\begin{document}
\bigskip\bigskip\bigskip
\begin{center}
{\Large\bf Critical almost Mathieu operator:
hidden singularity, gap continuity, and the Hausdorff dimension of the spectrum}\\
\bigskip\bigskip
S. Jitomirskaya\footnote{Department of Mathematics, UC Irvine, USA}
\ and I. Krasovsky\footnote{Department of Mathematics, Imperial College London, UK}\\
\bigskip \bigskip\bigskip\bigskip
{\it In memory of David Thouless, 1934--2019}
\end{center}
%\title{Hidden singularity, gap continuity, and Hausdorff dimension of the spectrum of the critical almost Mathieu operator}
%\author{
%S. Jitomirskaya\footnote{Department of Mathematics, UC Irvine, USA}
%\ and I. Krasovsky\footnote{Department of Mathematics, Imperial College London, UK
%}}
%\maketitl
\bigskip

\begin{abstract} 
\noindent
We prove almost Lipshitz continuity of spectra of singular
quasiperiodic  Jacobi matrices and obtain a representation of the critical
almost Mathieu family that has a singularity. This allows us to
prove  that the Hausdorff dimension of its spectrum is not larger than $1/2$ for all irrational frequencies, solving a long-standing problem.
Other corollaries include two very elementary proofs of zero measure of
the spectrum (Problem 5 in \cite{XXI}) and a similar Hausdorff dimension result for the quantum graph graphene.
\end{abstract}

\section{Introduction}
The critical almost Mathieu operator on $\ell^2(\mathbb{Z})$, that is
%Let $H_{\al,\th}$ with $\al,\th\in(0,1]$ be the self-adjoint operator 
%acting on $l^2(\mathbb Z)$ as follows:
\be\label{AM}
(H_{\al,\th,\lb}\phi)(n)=\phi(n-1)+\phi(n+1)+2\lambda\cos 2\pi(\al n+\th)\phi(n),
\qquad n=\dots,-1,0,1,\dots
\ee
with $\lambda=1,$ represents a transition from ``extended states''
for the subcritical case, $0\le\lambda<1,$ to ``localization'' for the
supercritical case, $\lambda>1.$ The almost Mathieu family
models an electron on the two-dimensional square lattice in a uniform
perpendicular magnetic field with flux $\alpha$ \cite{pei}; the critical case
corresponds to the isotropic lattice, while $\lambda \not= 1$
corresponds to anisotropy. The almost Mathieu operator
$H_{\al,\th}:= H_{\al,\th,1}$ at the critical value of the parameter $\lambda$ is the
one most important in physics, where it is also known as the Harper 
or Azbel-Hofstadter model (see e.g., \cite{av} and references therein), and also the one least understood mathematically
and even heuristically. In our approach below, we use the fact that the problem is two-dimensional in that we rely on an appropriate 
change of the gauge of the magnetic field.

The spectrum of $H_{\al,\th} $ for
irrational $\alpha$ is a $\theta$-independent\footnote{Also for any $\lb\neq 0$.}
fractal, beautifully depicted via the Hofstadter butterfly
\cite{hof}. There have been
many numerical and heuristic studies of its fractal dimension in 
physics literature (e.g., \cite{ketz,gei,tk,wa}). A conjecture, sometimes
attributed to Thouless (e.g., \cite{wa}), and appearing
already in the early 1980's, is that the dimension is
equal to 1/2. It has been rethought after rigorous and numerical studies demonstrated that the Hausdorff
dimension can be less than 1/2 (and even be zero) for some $\alpha$
\cite{ls, alsz, wa}, while packing/box counting dimension can be higher (even equal to one)
for some (in fact, of the same!) $\alpha$ \cite{jz}. However, all these are
Lebesgue measure zero sets of $\al$, and the conjecture may
still hold, in some sense. There is also a
conjecture attributed to J. Bellissard  (e.g., \cite{ls,hlqz}) that the
dimension of the spectrum is a
property that only depends on the tail in the continued fraction
expansion of $\alpha$ and thus should be the same for a.e. $\alpha$
(by the properties of the Gauss map). We discuss the history of rigorous results on the dimension 
in more detail below.

%All the previous numerical/heuristic and rigorous \cite{Last1,LS, jz,alsz, hlqz}
%results on the fractal dimensions have been
%limited to measure zero sets of frequencies $\alpha$. 

The equality in the original
conjecture can be viewed as  two inequalities, and here we prove one of
those for {\it all} irrational $\alpha.$ This is also the first result
on the fractal dimension that holds for more than a measure zero set of $\alpha$. Denote the spectrum of an operator $K$ by $\si(K)$,  the Lebesgue
measure of a set $A$ by $|A|$, and its Hausdorff dimension by $\dimH(A).$
We have

\begin{theorem}\label{Hdimthm}
For any irrational $\al$ and real $\th$,  $\dimH(\si(H_{\al,\th}))\leq1/2.$
%the Hausdorff dimension of
%the spectrum of $H_{\al,\th}$ is less than or equal to $1/2$.  
\end{theorem}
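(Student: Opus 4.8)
The plan is to reduce the Hausdorff dimension bound to a statement about how fast the spectrum of the critical almost Mathieu operator is approximated by periodic analogues, exploiting the exact rational structure. Fix an irrational $\al$ with continued fraction convergents $p_n/q_n$. It is classical (Avron–van Mosseln–Simon, and refinements) that $\si(H_{\al,\th})$ is sandwiched between $\si(H_{p_n/q_n,\th})$ up to a small error: there is a quantitative bound $d_H(\si(H_{\al,\th}),\si(H_{p_n/q_n})) \le C|\al-p_n/q_n|^{\kappa}$ for a suitable exponent, where $d_H$ is Hausdorff distance. The spectrum $\si(H_{p_n/q_n})$ is a union of $q_n$ bands, the edges of which are determined by the discriminant (trace of the transfer matrix over one period). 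The key to getting the exponent $1/2$ is that near a band edge the discriminant behaves generically like a \emph{square root}, so a band of ``length in discriminant-space'' $\ell$ has actual length $\sim \sqrt{\ell}$; summing $q_n$ such contributions and optimizing over $n$ should give the $1/2$ bound, provided one controls the total measure and the number of bands. The new input of this paper — the representation of the critical family with a \emph{hidden singularity} together with almost Lipschitz continuity of spectra of singular quasiperiodic Jacobi matrices, announced in the abstract — is what upgrades the naive continuity estimate to one strong enough to close the argument uniformly in $\al$.

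Concretely, the steps I would carry out are: (1) Use the gauge-change/Aubry-duality-type transformation mentioned in the introduction to rewrite $H_{\al,\th}$ (or its rational approximant) as a singular Jacobi matrix whose off-diagonal entries can vanish, isolating the singularity; this is the ``representation that has a singularity.'' (2) Prove the almost Lipschitz continuity: $d_H(\si(H_{\al}), \si(H_{p_n/q_n})) \le C q_n^{-1}\log^{A} q_n$ or the like, using the singular-Jacobi structure and a Thouless-formula / subharmonicity argument for the Lyapunov exponent to transfer spectral information across the rational approximation with only a logarithmic loss. (3) For the periodic operator $H_{p/q}$, cover $\si(H_{p/q})$ by the $q$ bands; bound the $s$-dimensional Hausdorff content $\sum_j |I_j|^s$ using (a) $\sum_j |I_j| = |\si(H_{p/q})| = O(q^{-1})$ or even exponentially small, and (b) the square-root band-edge behavior, which gives $|I_j| \le C(\text{Lebesgue length of the }j\text{-th lacuna in }E\text{-space})^{1/2}$, i.e. a reverse Hölder-type inequality for the band lengths coming from the discriminant. (4) Thicken each band by the continuity error from step (2) to get a cover of $\si(H_{\al})$ itself, compute $\sum_j (|I_j| + Cq_n^{-1}\log^A q_n)^s$ with $s = 1/2 + \e$, and show it tends to $0$ as $n\to\infty$; conclude $\dimH(\si(H_{\al,\th})) \le 1/2$.

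The main obstacle I expect is step (2) combined with step (3): getting the continuity of the spectrum in $\al$ to be \emph{good enough} (almost Lipschitz in $q_n^{-1}$, with only logarithmic or subpolynomial losses) \emph{for every} irrational $\al$, including Liouville $\al$ where the convergents are sparse, while \emph{simultaneously} keeping the per-band estimate sharp at the $1/2$ exponent. The generic square-root behavior of the discriminant at band edges can degenerate (edges can be interior to the spectrum, bands can touch), and one must handle these degenerate edges without losing the exponent — presumably by using that such degeneracies, though they worsen the local exponent, are few in number, or by an argument that controls $\sum_j |I_j|^{1/2}$ directly via $\int |p(E)'|^{-1/2}$-type integrals against the density of states rather than edge-by-edge. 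Balancing the continuity error against the number of bands $q_n$ is delicate precisely because for Liouville $\al$ these two are in tension, and it is here that the hidden-singularity representation must do its essential work, presumably by showing the relevant transfer-matrix trace is close to that of a \emph{fixed} model whose band structure is explicit.
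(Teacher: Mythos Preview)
Your overall architecture is right and matches the paper: pass to the singular Jacobi representation, prove an almost Lipschitz continuity estimate for the spectrum along a subsequence of convergents, cover $\sigma(H_{\alpha})$ by the bands of the periodic approximant thickened by the continuity error, and estimate the $t$-dimensional Hausdorff content. But two of your ingredients are off.

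First, step (3) misidentifies where the exponent $1/2$ comes from. The square-root behaviour of the discriminant at band edges plays no role. The paper uses only two facts about the periodic spectrum: there are at most (a constant times) $q$ bands, and their \emph{total} length satisfies Last's bound $\sum_j |I_j| \le C/q$. After thickening each band by the continuity error $C q^{-2}\ln q$, one applies H\"older's inequality directly:
\[
\sum_{j} |w_j|^{t} \le q^{1-t}\Bigl(\sum_j |w_j|\Bigr)^{t} \le q^{1-t}\bigl(C q^{-1}\ln q\bigr)^{t} = C^{t} q^{1-2t}(\ln q)^{t},
\]
which goes to $0$ for $t>1/2$. No per-band reverse-H\"older estimate, no analysis of degenerate edges, no integral of $|p'(E)|^{-1/2}$ is needed; your ingredient (a) alone already closes the argument, and (b) is a red herring. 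Your worry about Liouville $\alpha$ also evaporates: the almost Lipschitz continuity is proved along a \emph{subsequence} of convergents constructed in the proof, and one good sequence of covers is all the Hausdorff dimension needs.

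Second, your guess for the mechanism behind step (2) --- Thouless formula and subharmonicity --- is not how the paper does it. The almost Lipschitz continuity (the paper's Theorem~\ref{continuitylemma}) is proved by choosing a special phase $\widetilde\theta$ at which the off-diagonal entries $b(\widetilde\theta + k_j\alpha)$ are forced to be extremely small along a sequence $k_j$, then truncating the formal eigenfunction (the orthogonal polynomial associated to the half-line operator) at $k_j$ to build a trial vector. The singularity is exploited because cutting at a near-zero of $b$ produces a small boundary error; the nontrivial point is controlling $|b_{k_j}\phi(k_j+1)|$, which is done via the $L^2$-normalisation of orthogonal polynomials against the spectral measure, together with a uniform lower bound on $\mu_\theta((E-\varepsilon,E+\varepsilon))$. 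No Lyapunov exponent or subharmonicity enters this part.
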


Of course, it only makes sense to discuss upper bounds on the Hausdorff dimension of a
set on the real line once its Lebesgue measure is shown to be zero. The Aubry-Andre conjecture stated
that the measure of the spectrum of
$H_{\al,\th,\lb}$ is equal to $4|1-|\lb||,$ so to $0$ if $\lb=1,$ for any irrational $\alpha.$
  This conjecture was popularized by B. Simon, first in his list of 15
  problems in mathematical physics \cite{15} and then, after it was
  proved by Last for a.e. $\alpha$ \cite{l1,L}, again as
  Problem 5 in \cite{XXI}, which was to prove this conjecture for the remaining measure
  zero set of $\alpha$, namely, for $\al$ of bounded type.\footnote{That is $\al$ with all
  coefficients in the continued fraction expansion bounded by some
  $M$.}
  The arguments of \cite{l1,L} 
  did not work for this set, and even though the semi-classical analysis of
Helffer-Sj\"ostrand \cite{hs} applied to some of this set for
$H_{\al,\th}$, it did not apply to other such $\al$, including, most notably, the
  golden mean --- the subject of most
  numerical investigations. For the non-critical 
  case, the proof for all $\al$ of bounded type  was given in \cite{jk}, but
  the critical ``bounded-type'' case remained difficult to crack. This remaining problem for zero measure of the spectrum
  of $H_{\al,\th}$ was finally solved by
  Avila-Krikorian \cite{ak}, who employed a deep dynamical
  argument. We note that the argument of \cite{ak} worked not for
  all $\alpha,$ but for a
 full measure subset of Diophantine $\alpha$. 
In the present paper, we provide a very simple argument that recovers this
theorem and thus gives an elementary
solution to Problem 5 of \cite{XXI}. Moreover, our argument works simultaneously for all irrational 
$\alpha$. 
\begin{theorem}\label{zero}
For any irrational $\al$ and real $\th$,  $|\si(H_{\al,\th})|=0.$ 
\end{theorem}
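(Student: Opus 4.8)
I would prove Theorem~\ref{zero} in two ways. \emph{First proof.} Theorem~\ref{Hdimthm} gives $\dimH\big(\sigma(H_{\al,\th})\big)\le 1/2<1$; on $\R$ the one-dimensional Hausdorff measure is Lebesgue measure, so any set of Hausdorff dimension below $1$ is Lebesgue--null, whence $|\sigma(H_{\al,\th})|=0$. This is the shortest route and I would record it first, noting that it uses the full strength of Theorem~\ref{Hdimthm}.

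\emph{Second, self-contained, proof.} The plan is to combine three ingredients. (i) The theorem of Last \cite{l1,L} (conjectured by Thouless): for $\al=p/q$ the spectrum $\sigma(H_{p/q,\th})$ is a union of at most $q$ closed bands, and $\big|\bigcup_{\th'}\sigma(H_{p/q,\th'})\big|\le C/q$ with $C$ an absolute constant. This is essentially the only substantial external input. (ii) The almost Lipschitz continuity announced in the abstract: after the magnetic gauge change, $H_{\al,\th}$ is unitarily equivalent to a quasiperiodic Jacobi matrix $J_{\al,\th}$ with off-diagonal coefficients of the form $a(\al n+\th)$, where $a$ is real-analytic and has zeros --- the ``hidden singularity'' --- and for such singular families, writing $\mathcal N_\rho(S)$ for the $\rho$-neighbourhood of a set $S$,
\be
\sigma(H_{\al,\th})\ \subseteq\ \mathcal N_{\om(|\al-\al'|)}\!\Big(\textstyle\bigcup_{\th'}\sigma(H_{\al',\th'})\Big),\qquad \om(t)\le C\,t\,\big|\log t\big|^{A},
\ee
uniformly in $\al,\al',\th$ --- only this one-sided inclusion will be used. (iii) Continued fractions: for the convergents $p_n/q_n$ of an irrational $\al$, $|\al-p_n/q_n|<1/(q_nq_{n+1})$, and $q_n,q_{n+1}\to\infty$.

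To conclude: by (ii), $\sigma(H_{\al,\th})$ lies in the $\de_n$-neighbourhood of $\bigcup_{\th'}\sigma(H_{p_n/q_n,\th'})$ with $\de_n=\om\big(1/(q_nq_{n+1})\big)$; by (i) that set is a union of at most $q_n$ intervals of total length $\le C/q_n$. Enlarging each by $\de_n$ on both sides and using $q_n\le q_{n+1}$,
\be
|\sigma(H_{\al,\th})|\ \le\ \frac{C}{q_n}+2q_n\de_n\ \le\ \frac{C}{q_n}+\frac{2C\,(2\log q_{n+1})^{A}}{q_{n+1}}\ \xrightarrow[n\to\infty]{}\ 0,
\ee
since $q_n\to\infty$ and $x^{-1}(\log x)^A\to0$ as $x\to\infty$. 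As $n$ is arbitrary, $|\sigma(H_{\al,\th})|=0$ for every irrational $\al$ and real $\th$.

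\emph{The main obstacle} is ingredient (ii); (i) and (iii) are classical and the final estimate is bookkeeping. The standard spectral-continuity results (Avron--van Mouche--Simon, Bellissard) give only H\"older-$\tfrac12$ dependence of the spectrum on the frequency, and this is genuinely insufficient --- it would contribute $2q_n\cdot C|\al-p_n/q_n|^{1/2}=O(1)$ to the display above rather than $o(1)$, which is exactly why a soft proof of zero measure for all irrational $\al$ (bounded type in particular) had been missing. What is actually needed is only \emph{some} H\"older exponent $\be>\tfrac12$; beating the exponent $\tfrac12$ is what the singular representation buys. The mechanism: at a zero of $a$ the Jacobi matrix $J_{\al,\th}$ splits as a direct sum, so for irrational $\al$ there is a controlled supply of near-decoupling sites, and the comparison of $J_{\al,\th}$ with $J_{\al',\th'}$ localises to finite blocks on which the two operators differ in norm by $O(L|\al-\al'|)$, $L$ the block length, rather than by $O(|\al-\al'|^{1/2})$. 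Producing the singular representation via the gauge change, and then bounding the block lengths and the quality of the near-decoupling simultaneously for all frequencies --- and controlling the dependence on $\th$ --- is the technical core, and is where I expect essentially all the work to go.
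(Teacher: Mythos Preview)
Your two routes are both valid, and the second is essentially what the paper calls its ``third proof'' (chiral gauge $+$ Last's bound \eqref{bound} $+$ Theorem~\ref{measure}/\ref{continuitylemma}). Two caveats on your ingredient (ii): the almost Lipschitz statement actually proved (Theorem~\ref{continuitylemma}) is weaker than what you wrote --- it holds only for one \emph{specific} phase $\wt\th$ constructed in the proof, only along a \emph{subsequence} $p_{n_j}/q_{n_j}$ of canonical approximants, and the comparison is with the \emph{half-line} spectrum $\sigma(H^+_{v,b,p_{n_j}/q_{n_j},\wt\th})$, which carries up to $2q_{n_j}$ extra isolated eigenvalues beyond the bands. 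None of this affects the conclusion (the irrational spectrum is $\th$-independent, one subsequence suffices, and the extra points contribute another $O(q_{n_j}\de_{n_j})$), but your ``uniformly in $\al,\al',\th$'' formulation is not established and is likely false in that generality.

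What you miss is the paper's \emph{primary} proof (Section~\ref{proofzero}), which is considerably more elementary and bypasses the continuity theorem entirely. It runs as follows: by the chiral gauge representation (Theorem~\ref{chiralrepresentthm}) the IDS of $H_{2\al,\th}$ and of the singular family $\wt H_{\al,\th}$ coincide (Theorem~\ref{dn}); since $L_{2\al}(E)=0$ on $\sigma(M_{2\al})$ \cite{bj}, the Thouless formula transfers this to $\wt L_\al(E)=0$ on $\sigma(\wt M_\al)=\sigma(M_{2\al})$. But a singular Jacobi matrix has empty absolutely continuous spectrum \cite{do}, while Kotani theory forces nonempty a.c.\ spectrum whenever $|\{E:\wt L_\al(E)=0\}|>0$. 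Hence $|\sigma(M_{2\al})|=0$. This is the ``very simple argument'' advertised in the introduction; it needs only the gauge change plus two off-the-shelf facts (Dombrowsky and Kotani), whereas your second route requires the full technical core (Theorem~\ref{continuitylemma}). Your first route, via Theorem~\ref{Hdimthm}, is logically downstream of that same continuity theorem, so it is correct but not an independent argument.
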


%In fact, only the first part of our argument is needed for that, and we
%complete the proof already in Section \ref{} .

Our proofs are based on two key ingredients. In Section \ref{chiral}, we introduce the chiral
gauge transform and show that the direct sum in $\th$ of operators
$H_{2\al,\th}$ is isospectral with the direct sum in $\th$ of $\wh H_{\al,\th}$ given by
\be\label{chi}
 (\wh H_{\al,\th}\phi)(n)=2\sin 2\pi(\al (n-1) +\th)\phi(n-1)+
 2\sin 2\pi(\al n +\th)\phi(n+1).
\ee

This representation of the almost Mathieu operator corresponds to choosing the chiral gauge for the
perpendicular magnetic field applied to the electron on the square
lattice. It was previously discussed non-rigorously in \cite{mz,KH,WZnpb}.\footnote{
In the rational case $\al=p/q$, a similar representation for the {\it discriminant}
of $H_{p/q,\th}$ is easy to justify (see, e.g., the appendix of
\cite{kprb}) and is already useful \cite{K}.} 
% \comm{The fact that the almost Mathieu operator has a ``representation'' in the form above
% was first noticed in physics literature by Kohmoto and Hatsugai \cite{KH}. This representation corresponds to choosing the chiral gauge for the magnetic field applied to the electron on the square lattice.}
The advantage of (\ref{chi}) is that it is a {\it
  singular} Jacobi matrix, that is the one with off-diagonal elements
not bounded away from zero, so 
that the matrix quasi-separates into blocks. 

This is already sufficient to conclude Theorem \ref{zero} which we do
in Section \ref{proofzero}. Yet another proof of Theorem \ref{zero} follows from Theorem \ref{measure} below.

The second key ingredient is a general result on almost Lipshitz continuity of
spectra for {\it singular} quasiperiodic  Jacobi matrices, Theorem
\ref{continuitylemma} in Section \ref{sectioncontin}.
The modulus of continuity statements have, in fact,  been central in previous literature. 
We consider a general class of quasiperiodic $C^1$ Jacobi matrices, that is operators on
$\ell^2(\mathbb Z)$  given by 
\be\label{H2}
(H_{v,b,\alpha,\theta}\phi)(n)=b(\th + (n-1)\al)\phi(n-1)+b(\th+n\al)
\phi(n+1)+v(\th+n\al)\phi(n),
\ee
with  $b(x), v(x) \in C^1(\mathbb R),$ and periodic with period $1$. 

Let $M_{v,b,\al}$ be the direct sum of $H_{v,b,\al,\th}$ over $\th\in[0,1),$ 
\be\label{M}
M_{v,b,\al}=\oplus_{\th\in[0,1)}H_{v,b,\al,\th}.
\ee
%We also set $M_\alpha:=M_{2cos,1,\al}.$

Continuity in $\al$ of $\si(M_{v,b,\al})$ in the Hausdorff metric was proved in \cite{AS1983,El82}.
Continuity of the measure of the spectrum is  a more delicate issue, since, 
in particular, $|\si(M_{\al})|$ can be (and is, for the almost Mathieu operator)  discontinuous at
rational $\al$. Establishing continuity at irrational $\al$ requires 
quantitative estimates on the Hausdorff continuity of the
spectrum. In the Schr\"odinger case, that is for $b=1$, Avron, van Mouche, and Simon
\cite{ams} obtained a very general result on  H\"older-$\frac 12$ continuity
(for arbitrary
$v \in C^1$), improving H\"older-$\frac{1}{3}$ 
continuity obtained earlier by Choi, Elliott, and Yui
\cite{CEY}. It was argued in \cite{ams}
that H\"older continuity of {\it any} order larger than $1/2$ would imply 
the desired continuity property of the measure of the
spectrum for {\it all} $\al$. Lipshitz continuity of gaps was proved
by Bellissard \cite{bel} for a large class of quasiperiodic operators,
however without a uniform Lipshitz constant, thus not allowing to conclude
continuity of the measure of the spectrum. In \cite{jk} (see
also \cite{jl3}) we
showed a uniform  almost Lipshitz
continuity for  Schr\"odinger operators with analytic potentials and
Diophantine frequencies in the
regime of positive Lyapunov exponents, which, in particular, allowed us to complete the
proof of the Aubry-Andre conjecture for the non-critical
case. 

A Jacobi matrix (\ref{H2}) is called {\it singular} if for some $\th_0$,
$b(\th_0)=0.$ We assume that the number of zeros of $b$ on its period is finite. 
Theorem \ref{continuitylemma} establishes a uniform almost Lipshitz continuity
in this case and allows to conclude continuity of the measure of the spectrum for general singular Jacobi matrices: 

%We say that $\frac{p_n}{q_n}$ is a sequence of
%approximants  to $\al$ if $|\al-\frac{p_n}{q_n}|=o(\frac{1}{q_n}).$ 

\begin{theorem}\label{measure}
For singular  $H_{v,b,\al,\th}$ %where $b$ has a finite number of
                                %zeros on its period, 
as above, for
any irrational $\al$ there exists a subsequence of canonical 
approximants $\frac{p_{n_j}}{q_{n_j}}$ such that \be \label{si}
|\si(M_{v,b,\al})|=\lim_{j\to\infty} \left|\si\left(M_{v,b,\frac{p_{n_j}}{q_{n_j}}}\right)\right|.\ee
\end{theorem}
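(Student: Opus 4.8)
The plan is to deduce Theorem \ref{measure} from the almost Lipschitz continuity of spectra, Theorem \ref{continuitylemma}, using the classical tension between upper and lower semicontinuity of the Lebesgue measure of the spectrum along rational approximants. First I would recall the two-sided picture. On the one hand, by the Hausdorff-metric continuity of $\si(M_{v,b,\al})$ in $\al$ of \cite{AS1983,El82}, the spectra $\si(M_{v,b,p_n/q_n})$ converge to $\si(M_{v,b,\al})$ in the Hausdorff metric. Since the spectrum $\si(M_{v,b,p_n/q_n})$ is a finite union of bands (being the union over $\th$ of the spectra of the periodic operators $H_{v,b,p_n/q_n,\th}$, which is a band spectrum with at most $q_n$ bands), and since Hausdorff convergence of sets that shrink onto a limit gives upper semicontinuity of measure, one obtains the easy inequality
\be
|\si(M_{v,b,\al})| \ge \limsup_{n\to\infty}\bigl|\si(M_{v,b,p_n/q_n})\bigr|.
\ee
Actually more care is needed: Hausdorff convergence alone does not bound measure either way, so the correct way to get this direction is to note that for any $\ve>0$, an $\ve$-neighborhood of $\si(M_{v,b,\al})$ contains $\si(M_{v,b,p_n/q_n})$ for all large $n$, whence $\limsup_n |\si(M_{v,b,p_n/q_n})| \le |\si(M_{v,b,\al})| + 2\ve\cdot(\text{number of components})$; one controls the number of bands by a fixed geometric bound coming from the finitely many zeros of $b$ and standard gap-counting, and then lets $\ve\to0$.

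The substantive direction is the reverse inequality along a subsequence, and this is where Theorem \ref{continuitylemma} enters. The almost Lipschitz continuity statement there should give, for the singular family, a bound of the form: every point of $\si(M_{v,b,\al})$ lies within distance $C\,|\al - p_n/q_n|\cdot q_n^{A}$ (or an analogous almost-Lipschitz expression, with a sub-polynomial or logarithmic correction) of $\si(M_{v,b,p_n/q_n})$, with a \emph{uniform} constant $C$ independent of $n$. Equivalently, $\si(M_{v,b,\al})$ is contained in a $\de_n$-neighborhood of $\si(M_{v,b,p_n/q_n})$ with $\de_n$ an explicit almost-Lipschitz modulus. By the definition of the continued fraction approximants, $|\al - p_n/q_n| < 1/(q_n q_{n+1})$, so $\de_n \to 0$ provided one passes to a subsequence $n_j$ along which $q_{n_j+1}$ grows fast enough relative to the sub-polynomial correction factor in Theorem \ref{continuitylemma} — this is exactly why the theorem is stated only for a subsequence. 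Then
\be
|\si(M_{v,b,\al})| \le \bigl|\si(M_{v,b,p_{n_j}/q_{n_j}})\bigr| + 2\,\de_{n_j}\cdot(\text{number of bands of }M_{v,b,p_{n_j}/q_{n_j}}),
\ee
and the point is that the number of bands is at most $q_{n_j}$ while $\de_{n_j}$ decays faster than any polynomial in $q_{n_j}$ along the chosen subsequence, so the error term $\de_{n_j} q_{n_j} \to 0$. Combining with the easy direction yields \eqref{si}.

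The main obstacle is the band-counting bookkeeping in the error term: one needs $\de_{n_j}\cdot(\#\text{bands})\to 0$, and a priori the number of bands of $M_{v,b,p/q}$ is $q$, so the almost-Lipschitz modulus $\de_n$ from Theorem \ref{continuitylemma} must beat $q_n$ — i.e., one really needs the continuity to be better than Hölder-$1$ would naively give, and the gain must be uniform. This forces the choice of a rapidly-growing subsequence of denominators (choosing $n_j$ so that $q_{n_j+1} \ge q_{n_j}^{K_j}$ with $K_j\to\infty$, which is always possible for irrational $\al$), after which $|\al-p_{n_j}/q_{n_j}| < q_{n_j}^{-1-K_j}$ kills the polynomial-in-$q$ factors coming both from the Lipschitz-type constant and from the band count. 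The remaining care is to confirm that Theorem \ref{continuitylemma} is genuinely one-sided-in-the-right-way, namely that it controls how far $\si(M_{v,b,\al})$ can stick out beyond $\si(M_{v,b,p_n/q_n})$ (the spectrum of the irrational operator lies near that of the rational one, with quantitative modulus); the singular structure — the quasi-separation into blocks when $b$ has zeros — is precisely what makes this uniform control possible, since it localizes the perturbation analysis to finite blocks whose size and the corresponding constants do not blow up, in contrast to the non-singular case where positivity of the Lyapunov exponent was needed in \cite{jk}.
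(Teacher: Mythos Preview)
Your overall architecture---easy upper semicontinuity plus a quantitative covering argument from Theorem~\ref{continuitylemma} for the reverse inequality---matches the paper's proof. But there is a genuine error in your subsequence reasoning, and a missing structural ingredient.

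\textbf{The fatal gap.} You write that one chooses $n_j$ so that $q_{n_j+1}\ge q_{n_j}^{K_j}$ with $K_j\to\infty$, ``which is always possible for irrational $\al$.'' This is false. For $\al$ of bounded type (e.g.\ the golden mean), $q_{n+1}\le (M+1)q_n$ for all $n$, so no such subsequence exists. Since bounded-type $\al$ is precisely the difficult case motivating the theorem, this is not a technicality. Fortunately the error is unnecessary: the actual modulus in Theorem~\ref{continuitylemma} is $|E-E'|\le C|\al-p_{n_j}/q_{n_j}|\,|\ln|\al-p_{n_j}/q_{n_j}||$, and since $|\al-p_n/q_n|<1/q_n^2$ always, one gets $\de_{n_j}\le C q_{n_j}^{-2}\ln q_{n_j}$, so $\de_{n_j}\cdot q_{n_j}\le C q_{n_j}^{-1}\ln q_{n_j}\to 0$ automatically, with no growth condition on $q_{n_j+1}$ needed.

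\textbf{Why the subsequence is really there.} The subsequence in the statement is not chosen for arithmetic reasons at all; it is the subsequence \emph{produced by} Theorem~\ref{continuitylemma}. That theorem only asserts the almost-Lipschitz bound along a particular sequence $p_{n_j}/q_{n_j}$, tied to an inductively constructed phase $\wt\th$ for which certain off-diagonal entries $b_{k_j}(\wt\th)$ are controllably small. You have inverted cause and effect here.

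\textbf{A missing piece.} Theorem~\ref{continuitylemma} compares $\si_{ess}(H^+_{v,b,\al,\wt\th})$ with $\si(H^+_{v,b,p_{n_j}/q_{n_j},\wt\th})$---half-line operators at a single phase, not $M_{v,b,\cdot}$. The paper bridges this with Lemma~\ref{halflinelemma}: the rational half-line spectrum consists of the (at most $q_{n_j}$) bands of $\si(H_{v,b,p_{n_j}/q_{n_j},\wt\th})\subset\si(M_{v,b,p_{n_j}/q_{n_j}})$ together with at most $2q_{n_j}$ isolated eigenvalues in the gaps. These extra points contribute $O(q_{n_j})$ additional intervals of width $2\de_{n_j}$ to the cover, which is harmless by the same arithmetic, but you do need to account for them.
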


\begin{remark} We note that this form of continuity is usually
  sufficient for practical purposes, since mere existence of some
  sequence of periodic approximants 
  along which the convergence happens is enough, as the measure of the spectrum can often be estimated for an arbitrary
rational. \end{remark}
In the case of Schr\"odinger operators (i.e., for $b=1$), the statement (\ref{si})  was previously established in various
degrees of generality in the regime of
positive Lyapunov exponents \cite{jk,jmavi,han} and, in all regimes (using \cite{ak}), for analytic 
\cite{jmarx} or sufficiently smooth \cite{zhao} $v$. Typically, proofs that work for
$b=1$ extend also to the case of non-vanishing $b,$ that is {\it non-singular} Jacobi
matrices, and there is no reason to believe the results of
\cite{jmarx,zhao} should be an exception. On the other hand, extending various Schr\"odinger results to the singular
Jacobi case is technically non-trivial and adds a significant degree
of complexity (e.g. \cite{jmarxreview,ajm,hyz}). Here however, we
show that a singularity can be {\it exploited},
rather than circumvented,\footnote{Singularity has also been treated recently as a friend rather than foe in
\cite{han2,bhj} in some other contexts.}
to establish enhanced continuity of spectra (Theorem \ref{continuitylemma})
and therefore Theorem \ref{measure}. Of course, Theorem \ref{zero}
also follows immediately from the chiral gauge representation, the bound (\ref{bound}) below, and Theorem \ref{measure},
providing a third proof of Problem 5 of \cite{XXI}.

Moreover, Theorem \ref{continuitylemma} combined with the chiral
gauge representation allows  to immediately prove Theorem
\ref{Hdimthm} by an argument of \cite{L}.
Indeed, the original intuition behind Thouless' conjecture  on the 
Hausdorff dimension 1/2 is based on another fascinating Thouless' conjecture \cite{T83,Tcmp}: that
for the critical almost Mathieu operator $H_{\al,\th},$ in the limit $p_n/q_n\to \al$, we have
$q_n|\si(M_{p_n/q_n})|\to c$ where $c=32C_c/\pi,$ $C_c$ being  the Catalan constant. 
Thouless argued that if $\si(M_{\al})$ is ``economically covered'' by $\si(M_{p_n/q_n})$ and
if all bands are of about the same size,\footnote{In reality, the
  bands can decay exponentially with distance from the
  center at each step in the continued fraction hierarchy
  \cite{hs}. The central bands can be power-law small in $1/q$
  \cite{K}. However, ``economically covered''  is a physicist's way of
  stating a nice modulus of continuity, and Thouless' intuition does work for the upper bound.} 
  then the spectrum, being
covered by $q_n$ intervals of size $\frac{c}{q_n^2}$, has the box counting
dimension $1/2.$ Clearly, the exact value of $c>0$ is not important for
this argument. An upper bound of the form \be\label{bound} 
q_n|\si(M_{p_n/q_n})| <C,\qquad n=1,2,\dots, 
\ee 
was proved by Last \cite{L}\footnote{with $C=8e$.}, which, combined with
H\"older-$\frac{1}{2}$ continuity, led him in \cite{L} to the bound $\le \frac{1}{2}$ for the
Hausdorff dimension for  irrational $\alpha$
satisfying $\lim_{n\to\infty}|\al-p_n/q_n|q_n^4=0$.
Such $\al$ form a zero measure set. The almost Lipschitz continuity of Theorem \ref{continuitylemma}
and (\ref{bound})
allow us to obtain the result (Theorem \ref{Hdimthm}) for {\it all} irrational $\al$.  

In the past few years, there was an increased interest in the dimension of 
the spectrum of
the critical almost Mathieu operator, leading to a number of other rigorous results mentioned above. 
Those include  zero Hausdorff dimension for a
subset of Liouville $\al$ by Last and Shamis \cite{ls}, also extended to all
weakly Liouville\footnote{ We say $\al$ is weakly Liouville if
  $\beta(\al):= -\limsup \frac{\ln \|n\al\|}{n}>0$, where 
  $\|\th\|=\mbox{dist}\,(\th, \mathbb{Z}).$}  $\al$  by Avila, Last,
Shamis, Zhou \cite{alsz}; the full packing (and therefore box counting) dimension for
weakly Liouville $\al$ \cite{jz}, and existence of a dense
positive Hausdorff dimension set of Diophantine $\al$ with  positive
Hausdorff dimension of the spectrum by Helffer, Liu, Qu, and Zhou \cite{hlqz}. All those results, as
well as heuristics by Wilkinson-Austin \cite{wa} and, of course, numerics, hold for
measure zero sets of $\alpha$. Recently, B. Simon listed the problem to
determine the Hausdorff dimension of the spectrum of the critical almost
Mathieu on his new list of hard unsolved problems \cite{linde}.

Since our proof of Theorem \ref{Hdimthm} only requires an estimate such as (\ref{bound})
and the existence of isospectral family of singular Jacobi
matrices, it applies equally well to all other situations where the
above two facts are present. For example, Becker et al \cite{bhj}
recently introduced a model of graphene as a quantum graph on the
regular hexagonal lattice and studied it in the presence of a magnetic
field with a constant flux $\Phi,$ with the spectrum denoted $\sigma^\Phi.$ Upon  identification with the
interval $[0, 1]$, the differential operator acting on each edge is then the maximal Schr\"odinger
operator 
$\frac{d^2}
{dx^2} +V (x)$ with domain $H^2$, where $V$ is a Kato-Rellich
potential symmetric with respect to $1/2.$ We then have 

\begin{theorem}\label{graphene}
For any symmetric Kato-Rellich potential
  $V\in L^2$,
%satisfying \ref{2.13'} 
 the Hausdorff dimension $\dim_H (\sigma^{\Phi})\leq 1/2$ for all
 irrational $\Phi$.

\end{theorem}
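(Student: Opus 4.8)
The plan is to reduce Theorem \ref{graphene} to the same two structural facts that drive the proof of Theorem \ref{Hdimthm}: (i) an isospectral family of singular quasiperiodic Jacobi matrices whose modulus of continuity is controlled by Theorem \ref{continuitylemma}, and (ii) a linear-in-$q$ upper bound of the form (\ref{bound}) for the measures of the periodic approximants. First I would recall the spectral analysis of the magnetic quantum graph on the hexagonal lattice from \cite{bhj}: a Floquet/secular-equation argument shows that $\lambda\in\sigma^\Phi$ if and only if the Hill discriminant $\Delta_V$ of the edge operator $-\tfrac{d^2}{dx^2}+V$ at energy $\lambda$ lies in the spectrum of a discrete magnetic operator on the triangular (or hexagonal) lattice, which after the standard magnetic-translation reduction is unitarily equivalent to a direct sum over $\theta$ of almost-Mathieu-type operators with flux $\Phi$. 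In other words, there is a closed set $S(\Phi)\subseteq\mathbb R$ — the spectrum of that discrete operator — and a real-analytic, piecewise-monotone function $\lambda\mapsto\Delta_V(\lambda)$ such that $\sigma^\Phi=\Delta_V^{-1}(S(\Phi))$.

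Next, I would invoke the chiral gauge representation of Section \ref{chiral}: the relevant discrete operator, being (conjugate to) the critical almost Mathieu operator $H_{2\Phi,\theta}$ up to trivial rescaling, is isospectral — as a direct sum over $\theta$ — to the singular Jacobi family $\widehat H_{\Phi,\theta}$ of (\ref{chi}), whose off-diagonal entries $2\sin2\pi(\Phi n+\theta)$ vanish on a finite set per period. Hence $S(\Phi)=\sigma(M_{\widehat v,\widehat b,\Phi})$ for the singular data $\widehat b(x)=2\sin 2\pi x$, $\widehat v\equiv 0$, which is covered by Theorem \ref{continuitylemma}. Combining Theorem \ref{continuitylemma} with Last's bound (\ref{bound}) (which applies to the critical almost Mathieu approximants, hence to $S(p_n/q_n)$ after rescaling) and running precisely the argument of \cite{L} — covering $S(\Phi)$ by the $O(q_n)$ bands of $S(p_n/q_n)$, each of length $O(q_n^{-2})$ by almost-Lipschitz continuity plus (\ref{bound}) — yields $\dim_H S(\Phi)\le 1/2$ for every irrational $\Phi$. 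Finally, since $\Delta_V$ is real-analytic and non-constant, on each compact interval it is Lipschitz, and away from its critical points it is a bi-Lipschitz local diffeomorphism; the preimage of a set of Hausdorff dimension $d$ under a locally bi-Lipschitz map has dimension $d$, and the (countable) set of critical points contributes nothing. Therefore $\dim_H\sigma^\Phi=\dim_H\Delta_V^{-1}(S(\Phi))\le 1/2$.

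The one point that needs care — and which I expect to be the main obstacle — is the passage from the quantum-graph resolvent to the discrete model in a form to which both (\ref{bound}) and Theorem \ref{continuitylemma} literally apply: one must check that the secular equation of \cite{bhj} genuinely produces the critical (isotropic) almost Mathieu operator and not an anisotropic or differently-normalized Jacobi matrix, that the rescaling relating $S(\Phi)$ to $\sigma(M_{2\Phi})$ does not spoil the uniform-in-$n$ constant in (\ref{bound}), and that the approximants $S(p_n/q_n)$ are exactly the spectra of the periodic operators $M_{\widehat v,\widehat b,p_n/q_n}$ so that Theorem \ref{measure}/Theorem \ref{continuitylemma} can be quoted verbatim. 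A secondary technical issue is the behavior of $\Delta_V$ near the band edges of the free Hill operator, where $\Delta_V'$ vanishes: there the map is only Hölder-$\tfrac12$, but since it is still Hölder it cannot increase Hausdorff dimension beyond a bounded factor, and in fact a more careful local analysis ($\Delta_V(\lambda)\mp 2 \sim c(\lambda-\lambda_j)^2$ generically) shows the preimage of a $1/2$-dimensional set near such a point still has dimension $\le 1/2$; one should record this case explicitly. Modulo these checks, the proof is a direct transcription of the argument for Theorem \ref{Hdimthm}, which is exactly the sense in which, as noted in the excerpt, the method ``applies equally well to all other situations where the above two facts are present.''
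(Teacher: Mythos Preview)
Your high-level strategy is exactly the paper's: once one has (a) a singular quasiperiodic Jacobi representation of the relevant discrete spectrum and (b) a bound of type~(\ref{bound}) for the periodic approximants, Theorem~\ref{continuitylemma} plus Last's covering argument give $\dim_H\le 1/2$. The paper's proof is indeed a single sentence --- follow Lemma~4.4 of \cite{bhj}, replacing the H\"older-$\tfrac12$ continuity used there by Theorem~\ref{continuitylemma} and finishing as in the proof of Theorem~\ref{Hdimthm}.

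Where you diverge is in how you obtain (a) and (b). You route the argument through the critical almost Mathieu operator and the chiral gauge of Section~\ref{chiral}, asserting that the discrete operator arising from the hexagonal quantum graph is ``(conjugate to) the critical almost Mathieu operator $H_{2\Phi,\theta}$ up to trivial rescaling.'' This is the step you yourself flag as the ``main obstacle,'' and it is not correct: the hexagonal lattice is not the square lattice, and the secular reduction in \cite{bhj} produces its \emph{own} singular Jacobi family (the off-diagonal sampling function there is built from $1+e^{2\pi i(\cdot)}$ and vanishes on a finite set per period), together with its own analogue of the bound~(\ref{bound}). The chiral gauge is therefore irrelevant to Theorem~\ref{graphene}; both ingredients are supplied directly by \cite{bhj}, and the paper simply upgrades the modulus of continuity. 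If you insist on your detour you would have to prove a nontrivial isospectrality between two genuinely different models, which is not available.

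A secondary issue: your treatment of the critical points of $\Delta_V$ has the H\"older inequality pointing the wrong way. Near a quadratic critical value the local inverse of $\Delta_V$ is only H\"older-$\tfrac12$, and a H\"older-$\alpha$ map can \emph{raise} Hausdorff dimension by a factor $1/\alpha$; so ``still H\"older'' does not by itself keep the preimage at dimension $\le 1/2$. This point is already dealt with in the argument of \cite{bhj} (the exceptional energies form a countable discrete set and are handled there), which is another reason to quote \cite{bhj} rather than rebuild that part of the reduction.
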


This result was proved in \cite{bhj} for a topologically generic but
measure zero set of $\al.$

% It is proved in \cite{bhj}
% that for any irrational $\Phi$, $\sigma^\Phi$ is a set of measure
% zero. This was proved by a reduction to a Jacobi matrix which happened
% to be singular and for which an estimate (\ref{bound})  was
% established. establishing For the Hausdorff dimension however the authors obtained the
% $1/2$ bound but 

The basic idea of the proof of  Theorem \ref{continuitylemma} is
that a singularity could lead to enhanced continuity because creating
approximate eigenfunctions by cutting at near-zeros of the
off-diagonal terms leads to smaller errors in the kinetic
energy. However, without apriori estimates on the behavior of solutions
(and it is in fact natural for solutions to be large around the singularity)
this in itself is insufficient to achieve an improvement over the H\"older
exponent $1/2$. Our main technical achievement here is in finding a proper
continuity statement and an argument that allows to exploit the
singularity efficiently.

Finally, we briefly comment that for Schr\"odinger operators with analytic periodic potentials almost
Lipshitz continuity of gaps holds for Diophantine $\al$ for all
non-critical (in the sense of Avila's global theory \cite{global}) energies \cite{jmarx}. For critical
energies, we do not have anything better than H\"older-$\frac{1}{2}$ that
holds universally. Here we prove that for the prototypical critical
potential, the critical almost Mathieu,
almost Lipshitz continuity of spectra  also holds, because
of the hidden singularity. This leads to two potentially related
questions for analytic quasiperiodic Schr\"odinger operators:
\begin{enumerate}
\item Does some form of uniform almost Lipshitz continuity (a statement
such as Theorem \ref{continuitylemma}) always hold?
\item Is there always a singularity hidden behind the criticality?
\end{enumerate}

\section{Preliminaries}

\subsection{IDS  and the Lyapunov exponent}

% \noindent
% 1. {\bf The integrated density of states.}
For a
family of operators $H_\th$ on $\ell^2(\mathbb Z)$
such that $T^{-1}H_\th T = H_{\th-\al}$, let $\nu_{\theta}$ be the spectral measure of $H_{\theta}$ corresponding to $\delta_0=(\dots,0,\de_0(0)=1,0,\dots)$. Namely for any Borel set $A$ we have
\begin{align*}
\nu_{\theta}(A)=( \chi_A(H_{\th})\delta_0,\delta_0).
\end{align*}
The {\it density of states measure} $\rho$ is defined by
\begin{align*}
\rho(A)=\int_{\T}\nu_{\theta}(A)\ \mathrm{d}\theta.
\end{align*}
We have \cite{AS1983} $\si(\oplus_{\th}H_\th)=\supp \,\rho.$
The cumulative distribution function $N(E):=\rho((-\infty, E))$ of $\rho(A)$ is called the {\it integrated density of states
  (IDS)} of $H_{\th}$.

% \noindent
% 2. {\bf The Lyapunov exponent.}
For a Jacobi matrix (\ref{H2}) we label the zeros of $b(\theta)$ on the period, whose number we assume to be finite, by $\theta_1, \theta_2,\dots,\theta_m$. (For $b(\theta)=2\sin 2\pi\th$ from (\ref{chi}), we have
  two zeros $\th_1=0$, $\th_2=1/2$.)
Let $\Theta=\cup_{j=1}^m \cup_{k\in \bbz}\left\lbrace
  \theta_j+k\al\right\rbrace$. %, in particular if $\frac{\Phi}{2\pi}\in \Q$, then $\Theta$ is a finite set in $\T$.

For $\theta\notin \Theta$, the eigenvalue equation $H_{v,b,\al,\th}\phi=E \phi$ has the following dynamical reformulation:
\begin{align*}
\left(
\begin{matrix}
\phi(n+1)\\
\phi(n)
\end{matrix}
\right)=
A^{E}\left(\theta+n\al \right)
\left(
\begin{matrix}
\phi(n)\\
\phi(n-1)
\end{matrix}
\right),
\end{align*}
where 
\begin{align*}
\mathrm{GL}(2, \bbc)\ni A^{E}(\theta)=
\frac{1}{b(\theta)}
\left(
\begin{matrix}
E-v(\theta)\ &-b(\theta-\al)\\
b(\theta)\ &0
\end{matrix}
\right)
\end{align*}
is called the {\it transfer matrix}.
%:=\frac{1}{c(\theta)}D^{\lambda}_{c}(\theta).
Let 
\begin{align*}
A^{E}_{n}(\theta)=A^{E}(\theta+(n-1)\al)\cdots A^{E}(\theta+\al)A^{E}(\theta)
\end{align*}
be the {\it n-step transfer matrix}.
%=\frac{1}{\prod_{j=0}^{n-1}c(\theta+j\frac{p}{q})}D^E_{\frac{p}{q}, c, n}(\theta)

The {\it Lyapunov exponent} of $H_{v,b,\al,\theta}$ at energy $E$ is defined as
\begin{align}\label{defLE}
L(E)=\lim_{n\rightarrow\infty}\frac{1}{n}\int_{\T}\ln{\|A^E_n(\theta)\|}\ \mathrm{d}\theta.
\end{align}
The Thouless formula (e.g., \cite{tes}) links $N(E)$ and $L(E)$:

\be \label{thou}L(E) = -\int_{\T} \ln |b(\th)|d\th + \int_{\mathbb{R}} \ln|E-E_1|dN(E_1).\ee

Note that both for $b(\th)=1$ and $b(\theta)=2\sin 2\pi\th$, we have
\be\label{thou2}
\int_{\T} \ln |b(\th)|d\th =0.
\ee 

\subsection{Continued fraction expansion}
%Let $\alpha\in \R\setminus \Q$.
Let $\alpha\in (0,1)$ be irrational. Then $\alpha$ has the following continued fraction expansion 
\begin{align*}
\alpha=\frac{1}{a_1+\frac{1}{a_2+\frac{1}{a_3+\cdots}}},
\end{align*}
with $a_n$ positive integers for $n\geq 1$.

%\begin{defi}\label{ubcont}
%We say $\alpha$ has {\it bounded continued fraction expansion}, if there exists a constant $C>0$ such that $a_n\leq C$ for any $n$. Otherwise, we say $\alpha$ has {\it unbounded continued fraction expansion}.
%\end{defi}
%\begin{rem}
%It is well known that a.e. $\alpha\in \R\backslash \Q$ has unbounded continued fraction expansion.
%\end{rem}

The reduced rational numbers
\begin{align}\label{defpnqn}
\frac{p_n}{q_n}=\frac{1}{a_1+\frac{1}{a_2+\frac{1}{\cdots+\frac{1}{a_n}}}},\qquad n=1,2,\dots,
\end{align}
are called the {\it canonical approximants} of $\alpha$. 
The following property  is well-known:
\begin{align}\label{q}
\frac{1}{q_n(q_n+q_{n+1})}<\left|\alpha-\frac{p_n}{q_n}\right|<\frac{1}{q_{n}q_{n+1}}<\frac{1}{q_{n}^2}.
\end{align}

\section{Chiral gauge}\label{chiral}
Consider the following operator on $\ell^2(\mathbb Z)$:
\be
\wt H_{\al,\th}=\wh H_{\al,1/4+\al/2+\th},
\ee
%\begin{eqnarray}\label{AMchiral}
%(\wt H_{\al,\th}\phi)(n)=2\cos[2\pi(\al (n-1/2) +\th)]\phi(n-1)+
%2\cos[2\pi(\al (n+1/2) +\th)]\phi(n+1),\\
%n=\dots,-1,0,1,\dots\notag 
%\end{eqnarray}
%where $\al,\th\in[0,1)$. Clearly, $\wt H_{\al,\th}=\wh
%H_{\al,\th+1/4+\al/2},$ and we introduce the new notation simply for convenience.
in terms of the operator defined in (\ref{chi}).

Define the following unitary operators on $\ell^2(\mathbb Z\times \T)$, where
$\T=\mathbb{R}/\mathbb{Z}$:

\begin{align}
(T\phi)(n,\th)&=\phi(n+1,\th),\qquad 
(S\phi)(n,\th)=e^{2\pi  i(\th+n\al)}\phi(n,\th),\la{SandT}\\
(U_x\phi)(n,\th)&=e^{2\pi i n x (\th+n\al/2)}\phi(n,\th),\la{Q}\\
(R\phi)(n,\th)&=\sum_{k\in\mathbb{Z}}e^{-2\pi i
  k(\th+n\al)}\int_{\T}e^{-2\pi i n\bt}\phi(k,\bt)d\bt.\la{R}
\end{align}

Note that $H_{\al,\th}$ and $\wt H_{\al,\th}$ have the
following representation in terms of $S$, $T$ considered on the
subspace of $\ell^2(\mathbb Z\times\T)$ for a fixed $\th$ (clearly
these subspaces are invariant w.r.t. $S$, $T$, but not w.r.t. $R$):
\begin{align}
H_{\al,\th}&=T+T^{-1}+S+S^{-1},\\
\wt
H_{\al,\th}&=e^{i\pi\al}(ST+S^{-1}T^{-1})+e^{-i\pi\al}(ST^{-1}+S^{-1}T).
\end{align}

Consider the direct sums (particular cases of (\ref{M}))
\be\la{wtM}
M_{\al}=\oplus_{\th\in[0,1)} H_{\al,\th},\qquad
\wt M_{\al}=\oplus_{\th\in[0,1)}\wt H_{\al,\th}.
\ee

We have

\begin{theorem}\label{chiralrepresentthm}
The operators $M_{2\al}$ and $\wt M_{\al}$ are unitarily equivalent. 
Namely,  $\wt M_{\al}=QV^{-1}M_{2\al}VQ^{-1}$, where 
$V:\ell^2(\mathbb Z\times \T)\to 
\ell^2(2\mathbb Z\times \T)$ is given by $(V\phi)(n,\th)=\phi(2n,\th)$, and
\[
Q=U_1RU_{1/2}
\]
in terms of the operators $U_x$, $R$ given in (\ref{Q}), (\ref{R}).
\end{theorem}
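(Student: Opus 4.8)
The plan is to verify the claimed unitary equivalence $\wt M_{\al}=QV^{-1}M_{2\al}VQ^{-1}$ by tracking what each factor does to the generators $S$, $T$. Since $M_{2\al}$ (restricted to the fixed-$\th$ subspaces) is $T+T^{-1}+S+S^{-1}$ with $S$ built from the frequency $2\al$, and $\wt M_\al$ is $e^{i\pi\al}(ST+S^{-1}T^{-1})+e^{-i\pi\al}(ST^{-1}+S^{-1}T)$ with $S$ built from frequency $\al$, the whole computation reduces to commutation relations. First I would record the basic relation $TS = e^{2\pi i\al}ST$ (and its frequency-$2\al$ analogue $TS=e^{4\pi i\al}ST$), which follows directly from the definitions in (\ref{SandT}), and note that $R$ is essentially a (partial) Fourier transform in the pair $(n,\th)\leftrightarrow(k,\bt)$ that swaps the roles of multiplication and translation, i.e. conjugation by $R$ interchanges $S$ and $T$ (up to explicit phases coming from the $n\al$ in the kernel). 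The operators $U_x$ are diagonal gauge transformations, so conjugating $S$ or $T$ by $U_x$ only produces controlled phase factors depending on $n$.

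The key steps, in order: (i) compute $U_{1/2} S U_{1/2}^{-1}$ and $U_{1/2} T U_{1/2}^{-1}$ on the fixed-$\th$ subspace, getting $S$ and $T$ multiplied by phases of the form $e^{\pm i\pi(\ldots)}$ linear in $n$; (ii) compute $R(\,\cdot\,)R^{-1}$, showing it exchanges translation and modulation — here one must be careful that $R$ acts on the full $\ell^2(\mathbb Z\times\T)$ and mixes the $\th$-fibers, so $R$ is exactly the tool that converts the direct integral over $\th$ of the ``square'' operator $H_{2\al,\th}$ into the direct integral of the chiral one; (iii) compute $U_1(\,\cdot\,)U_1^{-1}$, another phase adjustment; (iv) account for $V$, which restricts from $\mathbb Z$ to $2\mathbb Z$ and is what turns frequency $2\al$ into frequency $\al$ (an even site $2n$ sees phase $e^{2\pi i (2n)\al} = e^{2\pi i n(2\al)}$, so halving the lattice doubles the effective frequency in the other direction); (v) assemble $Q=U_1RU_{1/2}$ and $V$, multiply out all the accumulated phases, and check that $S+S^{-1}+T+T^{-1}$ becomes exactly $e^{i\pi\al}(ST+S^{-1}T^{-1})+e^{-i\pi\al}(ST^{-1}+S^{-1}T)$. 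The shift $\th\mapsto 1/4+\al/2+\th$ built into $\wt H_{\al,\th}=\wh H_{\al,1/4+\al/2+\th}$ should emerge precisely from matching the constant phases; in particular the $1/4$ should come from an $e^{i\pi/2}=i$ bookkeeping factor and the $\al/2$ from symmetrizing the $n$ and $n-1$ appearances of $\sin 2\pi(\al n+\cdot)$ in (\ref{chi}).

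The main obstacle I expect is step (ii) together with the bookkeeping in step (v): one has to be genuinely careful about the domains and the fact that $R$ does not preserve the fixed-$\th$ subspaces, so the statement is really about the direct integrals $M_{2\al}$ and $\wt M_\al$ and not about individual fibers — writing $R$ as $\mathcal F_{\bt\to n}^{-1}\circ(\text{twist by }e^{-2\pi i k(\th+n\al)})\circ\mathcal F_{k\to ?}$ cleanly enough to compute $RSR^{-1}$ and $RTR^{-1}$ is the delicate part. A secondary subtlety is convergence/unitarity of $R$ (it is a composition of Fourier series and integrals, so formally unitary, but one should note it maps $\ell^2(\mathbb Z\times\T)$ to itself isometrically and then the algebraic identity extends by density from nice $\phi$). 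Once the action of each generator is pinned down, the remaining verification that all the phase factors combine to give exactly the coefficients $e^{\pm i\pi\al}$ is a finite trigonometric/algebraic check, which I would present as a short computation rather than grinding through every term. I would also remark that conjugating by $Q^{-1}$ rather than $Q$ is just a matter of which direction one reads the equivalence, and that unitary equivalence of the direct integrals immediately gives $\si(\wt M_\al)=\si(M_{2\al})$, which is the form used later.
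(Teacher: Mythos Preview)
Your plan is correct and matches the paper's own proof: both reduce to establishing commutation relations among $S$, $T$, $R$, $U_x$ (the paper records $RS=T^{-1}R$, $RT=SR$, $TU_x=e^{i\pi x\al}S^xU_xT$, etc.) and then checking that $Q=U_1RU_{1/2}$ intertwines $T^2+T^{-2}+S+S^{-1}$ with $\wt M_\al$, with $V$ supplying the passage to $M_{2\al}$. The only cosmetic difference is that the paper works directly with one-sided relations of the form $QS=e^{-i\pi\al}ST^{-1}Q$, $QT^2=e^{i\pi\al}STQ$ rather than composing the three conjugations $U_{1/2}(\cdot)U_{1/2}^{-1}$, $R(\cdot)R^{-1}$, $U_1(\cdot)U_1^{-1}$ separately, which slightly shortens the bookkeeping but is equivalent to your plan.
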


\begin{proof}
First, we need commutation relations between the operators
(\ref{SandT})--(\ref{R}). We have
\[\begin{aligned}
(RS\phi)(n,\th)&=\sum_{k\in\mathbb{Z}}e^{-2\pi i
  k(\th+n\al)}\int_{\T}e^{-2\pi i n\bt}e^{2\pi
  i(\bt+k\al)}\phi(k,\bt)d\bt\\ &=
\sum_{k\in\mathbb{Z}}e^{-2\pi i
  k(\th+(n-1)\al)}\int_{\T}e^{-2\pi i
  (n-1)\bt}\phi(k,\bt)d\bt=(T^{-1}R\phi)(n,\th),
\end{aligned}\]
so that
\be\la{RS}
RS=T^{-1}R.
\ee
By taking the inverses %gives $S^{-1}R^{-1}=R^{-1}T$. 
and then multiplying by $R$ from both sides we obtain
\be\la{RS-1}
RS^{-1}=TR.
\ee

Similarly to (\ref{RS}), (\ref{RS-1}), we obtain
\be\la{RTpm1}
RT=SR,\qquad RT^{-1}=S^{-1}R.
\ee

Furthermore, we obtain in the same way as (\ref{RS}) that
\be\la{TU}
TU_x=e^{i\pi x\al}S^x U_xT,
\ee
from which it follows that
\be\la{UT}
U_xT=e^{-i\pi x\al}S^{-x}TU_x,\qquad U_xT^{-1}=e^{i\pi x\al}T^{-1}S^{x}U_x,
\ee
where the second relation is obtained by applying $T^{-1}$ from both
sides of (\ref{TU}).

Finally, it is easy to verify that
\be
S^x T= e^{-2\pi i x\al} TS^x,\qquad T^{-1}S^{-x}= e^{2\pi i
  x\al}S^{-x}T^{-1},\qquad U_xS^{y}=S^{y}U_x.
\ee

Using the above commutation relations we obtain
that, with $Q=U_1RU_{1/2}$,
\be\la{QS}
QS=U_1RS U_{1/2}=U_1T^{-1}RU_{1/2}=e^{i\pi\al}T^{-1}SU_1RU_{1/2}=e^{-i\pi\al}ST^{-1}Q,
\ee
i.e., $QS=e^{-i\pi\al}ST^{-1}Q$,
which upon taking the inverses, then applying $Q$ from both sides, and
then using the commutation relation for $TS^{-1}$ yields
\be\la{QS-1}
QS^{-1}=e^{-i\pi\al}S^{-1}TQ.
\ee
Similarly to (\ref{QS}), (\ref{QS-1}), we obtain
\be
QT^2=e^{i\pi\al}STQ,\qquad  QT^{-2}=e^{i\pi\al}S^{-1}T^{-1}Q.
\ee
Collecting the last 4 relations together, we obtain
\be
Q(T^2+T^{-2}+S+S^{-1})=(e^{i\pi\al}[ST+S^{-1}T^{-1}]+e^{-i\pi\al}[ST^{-1}+S^{-1}T])Q
=\wt M_{\al}Q
\ee
Finally, observe that
\be
V(T^2+T^{-2}+S+S^{-1})=M_{2\al}V,
\ee
and the statement of the theorem follows.
\end{proof}

\section{Proof of Theorem \ref{zero}}\label{proofzero}

Let $N_\al(E),\rho_\al$ and $\wt N_\al(E),\wt \rho_\al$ denote the
integrated densities of states and density of states measures of $H_{\al,\th}$ and $\wt H_{\al,\th}$, respectively,
and $L_\al(E)$ and $\wt L_\al(E)$ denote the corresponding Lyapunov exponents. We then have

\begin{theorem} \label{dn} $N_{2\al}=\wt N_\al.$\end{theorem}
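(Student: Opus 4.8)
The plan is to compute both integrated densities of states as spectral measures of the corresponding direct sums evaluated at one distinguished vector, and then to observe that the unitary $Q$ of Theorem \ref{chiralrepresentthm} fixes that vector. Concretely, write $e_0\in\ell^2(\mathbb Z\times\T)$ for the constant field $e_0(n,\th)=\de_0(n)$. For any of our covariant families, $\rho(A)=\int_\T\nu_\th(A)\,d\th=\int_\T(\chi_A(H_\th)\de_0,\de_0)\,d\th$, so $\rho_{2\al}(A)=(\chi_A(M_{2\al})e_0,e_0)$ and $\wt\rho_\al(A)=(\chi_A(\wt M_\al)e_0,e_0)$ — the point being that $e_0$ is literally the same vector in both identities. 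It therefore suffices to intertwine $M_{2\al}$, in a suitable realization, with $\wt M_\al$ by a unitary that fixes $e_0$, and then to use that $\chi_A(\cdot)$ commutes with unitary conjugation.

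The convenient realization of $M_{2\al}$ is the operator $K:=T^2+T^{-2}+S+S^{-1}$ on $\ell^2(\mathbb Z\times\T)$ already appearing in the proof of Theorem \ref{chiralrepresentthm}, for which $QK=\wt M_\al Q$, i.e.\ $\wt M_\al=QKQ^{-1}$. First, $K=\oplus_\th K_\th$ with $(K_\th\phi)(n)=\phi(n+2)+\phi(n-2)+2\cos2\pi(\th+n\al)\phi(n)$ preserves the splitting $\ell^2(\mathbb Z)=\ell^2(2\mathbb Z)\oplus\ell^2(2\mathbb Z+1)$, the vector $\de_0$ lies in $\ell^2(2\mathbb Z)$, and $\phi\mapsto\phi(2\,\cdot)$ is a unitary equivalence of $K_\th|_{\ell^2(2\mathbb Z)}$ with $H_{2\al,\th}$ carrying $\de_0$ to $\de_0$; hence $(\chi_A(K_\th)\de_0,\de_0)=(\chi_A(H_{2\al,\th})\de_0,\de_0)$ and, integrating in $\th$, $(\chi_A(K)e_0,e_0)=\rho_{2\al}(A)$ (this is the content of the relation $VK=M_{2\al}V$ in Section \ref{chiral}). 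Second, $Qe_0=e_0$: by the definitions (\ref{Q}), (\ref{R}) each of $U_{1/2}$, $R$, $U_1$ fixes $e_0$ — the two $U_x$ because the phase $e^{2\pi i nx(\th+n\al/2)}$ is trivial at $n=0$, and $R$ because $(Re_0)(n,\th)=\int_\T e^{-2\pi i n\bt}\,d\bt=\de_0(n)$ — so $Q=U_1RU_{1/2}$ fixes $e_0$.

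Putting these together: since $Q$ is unitary with $Qe_0=e_0$ and $\wt M_\al=QKQ^{-1}$, we have $\chi_A(\wt M_\al)=Q\chi_A(K)Q^{-1}$, and therefore, for every Borel set $A$,
\[
\wt\rho_\al(A)=(\chi_A(\wt M_\al)e_0,e_0)=(\chi_A(K)Q^{-1}e_0,Q^{-1}e_0)=(\chi_A(K)e_0,e_0)=\rho_{2\al}(A),
\]
whence $\wt\rho_\al=\rho_{2\al}$ and thus $\wt N_\al=N_{2\al}$. There is no genuine obstacle here beyond the bookkeeping: the only things that must be got right are the identity $\rho(A)=(\chi_A(M)e_0,e_0)$, the fact that the distinguished vector $e_0$ is common to both families and is fixed by $Q$, and the sublattice identification relating $K$ to $H_{2\al,\th}$ (which is precisely $VK=M_{2\al}V$, already established). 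A more conceptual route, not needed here, would be to note that $M_{2\al}$ and $\wt M_\al$ lie in copies of the irrational rotation algebra, that $\rho$ is the canonical trace applied to spectral projections, and that $Q$ carries one rotation subalgebra onto another and hence preserves the trace by its uniqueness.
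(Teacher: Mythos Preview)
Your proof is correct and follows essentially the same route as the paper: both identify the density of states as the spectral measure of the direct sum at the distinguished vector $e_0=\delta_0$, observe that the intertwining unitary of Theorem~\ref{chiralrepresentthm} fixes $e_0$, and conclude equality of the measures. You are somewhat more explicit than the paper in two places --- you verify $Qe_0=e_0$ factor by factor, and you handle the $V$-identification by working directly with $K=T^2+T^{-2}+S+S^{-1}$ and its even-sublattice restriction --- whereas the paper simply asserts the existence of a unitary $U$ with $U\delta_0=\delta_0$ and uses continuous test functions in place of your characteristic functions; these are cosmetic differences.
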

\begin{proof} By Theorem \ref{chiralrepresentthm}, $M_{2\al}=U\wt M_\al U^{-1}$ for a
unitary $U$ such that $U\delta_0=\delta_0.$ Therefore, for any continuous function $\eta$
we have
\begin{align}\label{a}
 \int_{\mathbb{R}}
\eta(E) d\wt N_\al(E)=&\int_{\T}(\eta(\wt H_{\al,\th})\delta_0,\delta_0)\ d\theta=(\eta(\wt M_\al)\delta_0,\delta_0)\\
=&(\eta(U^{-1}M_{2\al}U)\delta_0,\delta_0)=
\int_{\T}(U^{-1}\eta( H_{2\al,\th})U\delta_0,\delta_0)\ d\theta\notag\\
=&\int_{\T}(\eta(H_{2\al,\th})U\delta_0,U\delta_0)\ d\theta
%=\int_{\T}(\eta(H_{2\al,\th}) \delta_0,\delta_0)\ d\theta\notag\\
=\int_{\mathbb{R}}
\eta(E) d N_{2\al}(E),\notag
\end{align}
and the result follows.
\end{proof}

\begin{remark} Similar
  proofs have been used in \cite{gjls,hj}. This also follows from Theorem 2 of \cite{mz}.\end{remark}
For irrational $\al$, $\si(M_{2\al})=\supp\, \rho_{2\al}=\supp\, \wt \rho_{\al}=\si(\wt M_\al).$ Since for $E\in \si(M_{2\al})$ we have $L_{2\al}(E)=0$ \cite{bj}, 
by (\ref{thou}), (\ref{thou2}), and Theorem \ref{dn} we also have $\wt
L_\al(E)=0$  for $E\in\si(\wt M_\al).$
The rest of the argument is the same as in Sec. 6.1 of \cite{bhj}. Namely, since the Jacobi matrix defining
(\ref{chi}) is singular, the absolutely continuous spectrum is
empty \cite{do}. However, by Kotani theory, it would not be empty if $|\{E\in\si(\wt M_\al): \wt
L_\al(E)=0\}|>0.$\footnote{The Kotani theory for non-singular Jacobi
  matrices is Theorem 5.17 in \cite{tes}. For singular
  matrices with $\ln |b(\th)| \in L^1$, it is Theorem 8 in \cite{bhj}.} 
  Thus $|\si(M_{\al})|=|\si(\wt M_{\al/2})|=0$. $\Box$
  
%\footnote{We include the details in the Appendix, for
%the readers' convenience.}

%\begin{remark} This is a standard argument, see  \cite{gjls,hj}\end{remark}.

\section{Continuity of the spectrum}\label{sectioncontin}
% We will prove a somewhat more general continuity statement than Theorem \ref{continuitylemmaAM}.
% Let $b(x)$, $v(x):\mathbb R\to \mathbb R$ be bounded
% differentiable functions with continuous and bounded derivative.
% We also assume that $a(x)$, $b(x)$ are periodic with period $1$, $b(0)=0$. Let
% $\al,\th\in (0,1].$ Define the operator on $\ell^2(\mathbb Z)$ 
% \be\label{H2}
% (H_{v,b,\alpha,\theta}\phi)(n)=b_{ n-1}(\th)\phi(n-1)+b_n(\th)
% \phi(n+1)+v_n(\th)\phi(n),
% \ee
% where $b_n(\th)=b(\al n+\th)$ and $v_n(\th)=v(\al n+\th)$.
% The almost Mathieu operator $\wh H_{\al,\th}$ in the
% representation (\ref{chi}) corresponds to the case $b(x)=2\sin2\pi x$, $v(x)=0$.

Consider the operator $H_{v,b,\alpha,\theta}$ given by (\ref{H2}) with
$b(x), v(x) \in C^1(\mathbb R),$ periodic of period $1$.
We further assume that $b(x)$ has at least one and at most a finite number of zeros on the period. 
Denote
\be
b_n(\th)=b(\th+n\al),\qquad v_n(\th)=v(\th+n\al).
\ee
By the general theory, $\si(H_{v,b,\alpha,\theta})$ is purely essential and is a compact set;
if $\al=p/q$ is rational, it consists of up to $q$ intervals\footnote{Of positive length if $b(\th+np/q)\neq 0$,
$n=1,2,\dots,q$. If one or several $b(\th+np/q)=0$, the spectrum is a finite number of points of infinite multiplicity.}
separated by gaps; if $\al$ is irrational, it does not depend on $\theta$ and has no isolated points.

Consider also the half-line operator $H_{v,b,\al,\th}^+=PH_{v,b,\al,\th}P$, where
$P\phi=(\dots,0,\phi(0),\phi(1),\dots)$ is the projection onto $\ell^2(\bbz_{\ge 0}$).

Let $\si_{ess}(H)$ denote the essential spectrum of $H$.

\begin{lemma}\label{halflinelemma}
For any real $\al$, $\th$, $\si_{ess}(H_{v,b,\al,\th}^+)=\si(H_{v,b,\alpha,\theta})$.
If $\al=p/q$ is rational, in addition to the essential spectrum,  
$\si(H_{v,b,p/q,\th}^+)$ may contain up to $2$ eigenvalues inside each gap;
and up to $1$ in each of the infinite intervals above and below the essential spectrum. 
\end{lemma}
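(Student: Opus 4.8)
The statement is essentially a standard rank-one / finite-rank perturbation fact, specialized to a Jacobi matrix that may be singular, so the plan is to reduce $H^+_{v,b,\al,\th}$ to $H_{v,b,\al,\th}$ by a finite-rank perturbation and control the number of eigenvalues that finite-rank perturbations can create inside a gap. First I would treat $H^+_{v,b,\al,\th}$ as obtained from the whole-line operator $H_{v,b,\al,\th}$ by the standard decoupling: the orthogonal sum $H^+_{v,b,\al,\th}\oplus H^-_{v,b,\al,\th}$ (with $H^-$ the operator on $\ell^2(\bbz_{<0})$) differs from $H_{v,b,\al,\th}$ by the rank-two operator that restores the two off-diagonal matrix elements $b(\th)=b_0(\th)$ coupling sites $0$ and $1$ (here I would be careful that a single broken bond is rank two as a self-adjoint perturbation, contributing the $2\times 2$ block with entries $b_0(\th)$ off the diagonal). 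Since $H^-_{v,b,\al,\th}$ is itself a half-line operator of the same type, and since a finite-rank self-adjoint perturbation does not change the essential spectrum (Weyl), this immediately gives $\si_{ess}(H^+_{v,b,\al,\th})\oplus\si_{ess}(H^-_{v,b,\al,\th})=\si_{ess}(H_{v,b,\al,\th})$; combined with the already-quoted fact that for irrational $\al$ the whole-line spectrum is $\th$-independent and has no isolated points (so $\si=\si_{ess}$), and for rational $\al$ the whole-line spectrum is purely essential, one concludes $\si_{ess}(H^+_{v,b,\al,\th})=\si(H_{v,b,\al,\th})$, using that $\si_{ess}(H^-)\subseteq\si_{ess}(H^+)$ by reflection-type comparison or simply because both equal $\si_{ess}(H)$.

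Next, for the eigenvalue count in the rational case $\al=p/q$: here the whole-line operator $H_{v,b,p/q,\th}$ is periodic with period $q$, so by Floquet theory its spectrum consists of at most $q$ bands separated by gaps, and the gaps are genuine spectral gaps. Inside a fixed gap $(E_-,E_+)$ of the essential spectrum, any eigenvalue of $H^+_{v,b,p/q,\th}$ is an eigenvalue of a finite-rank perturbation of $H^-_{v,b,p/q,\th}\oplus H^+_{v,b,p/q,\th}$ viewed against $H_{v,b,p/q,\th}$, which has no spectrum there; the cleanest way to bound the count is the Birman–Schwinger / eigenvalue interlacing principle: a self-adjoint rank-$r$ perturbation moves at most $r$ eigenvalues into (or out of) any spectral gap, so restoring the single bond (rank $2$) allows at most $2$ eigenvalues of $H^+\oplus H^-$ in the gap, hence at most $2$ for $H^+$ alone. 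I would phrase this via the min-max characterization of eigenvalues in a gap, or cite the standard fact (e.g. Weyl inequalities for the difference of spectral projections) that $\operatorname{rank}(\chi_{(E_-,E_+)}(A)-\chi_{(E_-,E_+)}(B))\le\operatorname{rank}(A-B)$ when $(E_-,E_+)$ meets neither essential spectrum. For the semi-infinite intervals above $\sup\si_{ess}$ and below $\inf\si_{ess}$, the same argument with a one-sided half-gap would naively give $2$, but here one uses that $H^+_{v,b,p/q,\th}$ is bounded below and above by the same bounds (up to the bond perturbation) and that only one of the two perturbation directions can push an eigenvalue out of the top of the spectrum — more carefully, the decoupling perturbation has one positive and one negative eigenvalue (the $2\times 2$ block $\left(\begin{smallmatrix}0&b_0\\ b_0&0\end{smallmatrix}\right)$ has eigenvalues $\pm|b_0|$), so it can raise at most one eigenvalue above the top band and lower at most one below the bottom band, giving the stated bound of $1$ in each unbounded interval.

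\textbf{Main obstacle.}
The genuinely delicate point is the singular case, where $b(\th+np/q)=0$ for some $n$: then the whole-line operator decouples into finite Jacobi blocks and its ``spectrum'' degenerates to a finite point set of infinite multiplicity, and the half-line operator likewise decouples. In that situation one must check that the bond being restored is exactly the one at the origin (site $0$–site $1$), so the perturbation is still rank $\le 2$, and that the notion of ``gap'' still makes sense; I would handle this by noting that even in the singular case the argument is a finite-rank comparison between $H^+\oplus H^-$ and $H$ across the single origin bond, and the interlacing bound is insensitive to whether the unperturbed spectrum is a union of intervals or a finite set. A secondary subtlety is making sure the eigenvalue bound of $2$ per gap is not improvable to $1$ in a way that would contradict known examples, and conversely that it is attained — but since the statement only claims ``up to $2$'' and ``up to $1$'', the one-directional sharpening is needed only for the unbounded intervals, where the sign structure of the rank-two perturbation (one plus, one minus) does the job cleanly. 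I expect the write-up to be short once the decoupling-perturbation bookkeeping (rank two, signature $(1,1)$) is stated precisely.
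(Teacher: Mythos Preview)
Your decoupling by the rank-two bond perturbation and the eigenvalue count via the signature $(1,1)$ of $\left(\begin{smallmatrix}0&b\\b&0\end{smallmatrix}\right)$ are exactly the paper's argument, so those parts are fine (modulo the bookkeeping: in the paper's convention the severed bond sits between sites $-1$ and $0$, with amplitude $b(\th-\al)$, not between $0$ and $1$).

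The genuine gap is the reverse inclusion $\si(H_{v,b,\al,\th})\subseteq\si_{ess}(H^+_{v,b,\al,\th})$. Weyl's theorem only gives you $\si_{ess}(H^+)\cup\si_{ess}(H^-)=\si_{ess}(H)=\si(H)$, and to upgrade this to $\si_{ess}(H^+)=\si(H)$ you invoke $\si_{ess}(H^-)\subseteq\si_{ess}(H^+)$. Neither of your two justifications works: ``both equal $\si_{ess}(H)$'' is circular (that is precisely the claim), and ``reflection-type comparison'' does not go through --- after reflecting $H^-$ you get a half-line Jacobi matrix with sampling functions $v(\th'-\cdot\,\al)$, $b(\th''-\cdot\,\al)$, which is \emph{not} in the family $H^+_{v,b,\al,\cdot}$ (the frequency flips sign and the off-diagonal sampling shifts), so you cannot appeal to $\th$-independence of the whole-line spectrum. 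You would still need to compare two genuinely different half-line operators, and that comparison is the whole content of the lemma.

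The paper closes this gap with a concrete Weyl-sequence translation: pick $n_j\to\infty$ with $n_j\al\to 0\bmod 1$ (for rational $\al=p/q$ take $n_j=qj$; for irrational $\al$ use density of the orbit), take any Weyl sequence $\psi^{(m)}$ for $H_{v,b,\al,\th}$ at $E$, and show that the shifted vectors $\psi^{(m)}(\cdot-n_{j(m)})$, for $j(m)\to\infty$ fast enough, form a singular Weyl sequence for $H^+_{v,b,\al,\th}$ at the same $E$ (they converge weakly to zero and the error from the coefficient mismatch $b_{n_j+\ell}\to b_\ell$, $v_{n_j+\ell}\to v_\ell$ vanishes). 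This is the right-limit/approximate-periodicity idea from Last--Simon, and it is what you are missing: it delivers the inclusion for \emph{every} $\th$, not merely almost every $\th$, which is what the lemma asserts.
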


\begin{proof}
A perturbation of rank $2$ of the form 
$
\begin{pmatrix}
0& b\cr b& 0
\end{pmatrix}
$
with eigenvalues $\pm b$
which removes $2$ symmetric off-diagonal elements splits  
$H_{v,b,\alpha,\theta}$ into a direct sum of $2$
half-line operators. 

Since finite-dimensional perturbations preserve the essential spectrum,
$\si_{ess}(H_{v,b,\al,\th}^+)\subset\si(H_{v,b,\alpha,\theta})$. The opposite inclusion
is shown by the following argument which is an adaptation of a part of a much more general analysis by Last and Simon \cite{LS1,LS2} of essential spectra of Jacobi matrices.
Let a sequence $n_j$ be such that $\al n_j \mod 1\to 0$. (In the rational case $\al=p/q$,
we can take $n_j=qj$.) Then $b_{n_j+\ell}(\th)\to b_{\ell}(\th)$ and $v_{n_j+\ell}(\th)\to v_{\ell}(\th)$
for any given $\ell\in\bbz$.
If $E\in \si(H_{v,b,\alpha,\theta})$ and $\psi^{(m)}$ is a sequence
of norm-$1$ trial functions with $\| (H_{v,b,\alpha,\theta} -E)\psi^{(m)}\|\to 0$ then
for $j(m)$ which tends to infinity sufficiently fast with $m$, we have that
$\| (H_{v,b,\alpha,\theta}^+ -E)\psi^{(m)}(\cdot-n_{j(m)})\|\to 0$
and $\psi^{(m)}(\cdot-n_{j(m)})\to 0$ weakly. Therefore $E\in \si_{ess}(H_{v,b,\al,\th}^+)$.

Finally, the statements about isolated points follow from the fact that the perturbation
has at most $1$ positive and $1$ negative eigenvalue.
\end{proof}

%For any $\al,\th$,
%denote the spectrum of $H^+_{v,b,\al}(\th)$ by $\sigma _{v,b,\al}(\th)$,
%and the union of the essential spectra of $H^+_{v,b,\al}(\th)$ over $\th$
%by $\spec(\al)$. 
We prove the following

\begin{theorem}\label{continuitylemma}
Let $\al\in(0,1)$ be irrational.
There exists a phase $\wt\th$ and a subsequence of canonical approximants $p_{n_j}/q_{n_j}$ to $\al$ such that 
for every $E\in \si_{ess}\left(H_{v,b,\alpha,\wt\th}^+\right)\equiv S_{v,b,\al}$ there is $E'\in \si\left(H_{v,b,\frac{p_{n_j}}{q_{n_j}},\wt\th}^+\right)$  with
\be |E-E'| \le C \left|\al-\frac{p_{n_j}}{q_{n_j}}\right|\left|\ln 
\left|\al-\frac{p_{n_j}}{q_{n_j}}\right|\right|.\label{contin2}
\ee
\end{theorem}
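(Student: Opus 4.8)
The plan is to build, for each $E$ in the essential spectrum of the half-line operator $H^+_{v,b,\al,\wt\th}$, an approximate eigenfunction of the periodic half-line operator $H^+_{v,b,p_{n_j}/q_{n_j},\wt\th}$ supported on a long block, and to control the error in a way that beats the naive H\"older $1/2$ bound by exploiting a near-zero of $b$ at the two ends of the block. First I would fix the singularity: let $\th_1$ be a zero of $b$, and choose the special phase $\wt\th$ together with the subsequence $n_j$ so that the orbit $\{\wt\th+k\al\}$ comes very close to $\th_1$ at two scales that frame an interval of length comparable to $q_{n_j}$; the standard pigeonhole/three-distance arguments on the circle, applied to the finitely many zeros $\th_1,\dots,\th_m$ of $b$, give a subsequence along which $\|\wt\th - \th_1\|$ and $\|\wt\th + N_j\al - \th_1\|$ are both $O(|\al - p_{n_j}/q_{n_j}|)$ for some $N_j \asymp q_{n_j}$. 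Because $b\in C^1$, at such sites $|b_k(\wt\th)| = O(|\al-p_{n_j}/q_{n_j}|)$.

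Next I would take a Weyl sequence for $E \in \si_{ess}(H^+_{v,b,\al,\wt\th})$: unit vectors $\psi^{(m)}$ with $\|(H^+_{v,b,\al,\wt\th} - E)\psi^{(m)}\| \to 0$ and $\psi^{(m)} \to 0$ weakly, and (by a standard truncation, since the weak limit is zero) arrange that $\psi^{(m)}$ is supported in a window $[a_m, a_m + N_j]$ whose endpoints sit at the near-zeros of $b$ described above, at the cost of an error that is $O(|b_{a_m}(\wt\th)| + |b_{a_m+N_j}(\wt\th)|)$ times the boundary values of $\psi^{(m)}$ — here is where the singularity pays off, because cutting the chain at a site where $b$ is tiny costs only $O(|\al - p_{n_j}/q_{n_j}|)$ rather than $O(1)$, provided the boundary values of $\psi^{(m)}$ are not too large. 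To handle the a priori possibility that solutions blow up near the singularity, I would not cut exactly at the minimum of $|b|$ but choose, within a controlled neighborhood, a cutting site where the product of $|b_k(\wt\th)|$ with $|\psi^{(m)}(k)|$ is small; an averaging argument over the $O(|\ln|\al-p_{n_j}/q_{n_j}||)$ candidate sites near the near-zero, combined with $\|\psi^{(m)}\|=1$, produces a cut with total error $O(|\al-p_{n_j}/q_{n_j}|\,|\ln|\al-p_{n_j}/q_{n_j}||)$ — this logarithmic loss is exactly the source of the $|\ln|\cdot||$ factor in \eqref{contin2}.

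Then I would replace $\al$ by $p_{n_j}/q_{n_j}$ inside this finite window: since $b, v \in C^1$ and the window has length $O(q_{n_j}) \asymp |\al - p_{n_j}/q_{n_j}|^{-1/2}$ at worst — in fact one takes $N_j \asymp q_{n_j}$ and uses $|q_{n_j}\al - p_{n_j}| < 1/q_{n_j}$, so on the whole window $|b(\wt\th+k\al) - b(\wt\th + k p_{n_j}/q_{n_j})| = O(k|\al - p_{n_j}/q_{n_j}|) = O(1/q_{n_j})$ — the total perturbation of the operator restricted to the window, measured against the unit vector $\psi^{(m)}$, is $O(q_{n_j} \cdot 1/q_{n_j} \cdot \ldots)$; being careful, summing $|k|\,|\al-p_{n_j}/q_{n_j}|$ over $|k|\le N_j \asymp q_{n_j}$ gives $O(N_j^2 |\al - p_{n_j}/q_{n_j}|) = O(q_{n_j}^2|\al-p_{n_j}/q_{n_j}|)$, and since $|\al-p_{n_j}/q_{n_j}| < 1/(q_{n_j}q_{n_j+1})$ along a suitable subsequence one can force $q_{n_j}^2|\al-p_{n_j}/q_{n_j}| = O(q_{n_j}/q_{n_j+1})$ small — actually the clean route is to note $q_{n_j}|\al-p_{n_j}/q_{n_j}| < 1/q_{n_j} \le |\al-p_{n_j}/q_{n_j}|^{1/2}$, which is far smaller than the $|\al-p_{n_j}/q_{n_j}|\,|\ln|\cdot||$ target, so this replacement error is harmless. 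The resulting vector is an $O(|\al-p_{n_j}/q_{n_j}|\,|\ln|\al-p_{n_j}/q_{n_j}||)$-approximate eigenfunction of $H^+_{v,b,p_{n_j}/q_{n_j},\wt\th}$, hence by the spectral theorem there is a genuine spectral point $E'$ within that distance, which is \eqref{contin2}.

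The main obstacle is the second step: controlling the boundary values of the Weyl sequence at the cutting sites without any a priori bound on solutions near the singularity. The honest fix is the averaging-over-candidate-cuts argument — one has a whole comb of near-zeros of $b$ (the images under the $q_{n_j}$-periodic-like dynamics of the finitely many zeros $\th_j$) within the window, of cardinality $\asymp |\ln|\al - p_{n_j}/q_{n_j}||$ or a constant times $q_{n_j}/(\text{something})$, and since $\sum_k |\psi^{(m)}(k)|^2 = 1$, at least one candidate site $k$ has $|\psi^{(m)}(k)|^2$ bounded by (number of candidates)$^{-1}$; pairing the small off-diagonal weight $O(|\al-p_{n_j}/q_{n_j}|)$ there with $|\psi^{(m)}(k)| \lesssim |\ln|\al-p_{n_j}/q_{n_j}||^{1/2} \cdot (\text{something})$ — and, more carefully, with the near-zero behaving like distance $\asymp |\al-p_{n_j}/q_{n_j}|$ times an integer, summing $|b|\cdot|\psi|$ over the comb and using Cauchy–Schwarz — yields the stated bound with precisely one logarithmic factor. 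Getting the bookkeeping of this comb-and-average right, and ensuring the window can simultaneously be framed by near-zeros at both ends and be long enough ($\asymp q_{n_j}$) to contain the bulk of $\psi^{(m)}$, is the delicate heart of the argument.
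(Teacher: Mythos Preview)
Your proposal has a genuine and fatal gap in the replacement step. You take the window length $N_j\asymp q_{n_j}$, and then claim that the resulting perturbation error $q_{n_j}|\al-p_{n_j}/q_{n_j}|<1/q_{n_j}$ is ``far smaller than the $|\al-p_{n_j}/q_{n_j}|\,|\ln|\cdot||$ target.'' This is false: $1/q_{n_j}$ is of order $|\al-p_{n_j}/q_{n_j}|^{1/2}$ or larger, and $|\al-p/q|^{1/2}\gg |\al-p/q|\,|\ln|\al-p/q||$ as $|\al-p/q|\to 0$. With window length $\asymp q_{n_j}$ the replacement error alone already exceeds the bound \eqref{contin2} for all $\al$ that are not exponentially Liouville, so the argument cannot close. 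The paper's crucial insight, which you are missing, is that the window length $k_j$ must be taken \emph{logarithmic} in $1/|\al-p_{n_j}/q_{n_j}|$, not of order $q_{n_j}$; then the replacement error is $k_j|\al-p_{n_j}/q_{n_j}|\asymp |\al-p_{n_j}/q_{n_j}|\,|\ln|\al-p_{n_j}/q_{n_j}||$, which is exactly the target and is the true source of the logarithm.

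Your boundary-control mechanism is also not the one that works. The ``comb of near-zeros'' of cardinality $\asymp|\ln|\al-p/q||$ does not exist as you describe: in a window of length $q_{n_j}$ the orbit $\{\wt\th+k\al\}$ is roughly $1/q_{n_j}$-separated, so at most $O(1)$ sites have $|b_k|$ as small as $O(|\al-p/q|)$. More importantly, averaging a generic Weyl sequence over cut sites cannot beat the naive estimate, because you have no a priori bound on solutions near the singularity. The paper instead uses the orthogonal polynomials $\phi(n,x,\wt\th)$ of the half-line operator as the trial vector (so one endpoint is free and $\|\psi_j\|\ge 1$), and controls the single boundary term $|b_{k_j}\phi(k_j+1,E,\wt\th)|$ by exploiting the orthonormality $\int\phi(k_j+1,x,\wt\th)^2\,d\mu_{\wt\th}(x)=1$ together with a uniform lower bound $\inf_{E,\th}\mu_\th((E-\ep,E+\ep))\ge f(2\ep)>0$ on the spectral measure. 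The phase $\wt\th$ is chosen by a nested-interval construction so that simultaneously $|b_{k_j}(\wt\th)|$ is tiny \emph{and} the derivative bound $C_0^{k_j+1}/\prod_{n<k_j}|b_n(\wt\th)|$ on $\phi(k_j+1,\cdot)$ is compatible with it; a short contradiction argument then forces $|b_{k_j}\phi(k_j+1,E,\wt\th)|\le g(|b_{k_j}|)$ for a suitable gauge function $g$. This is the ``main technical achievement'' the introduction refers to, and your averaging scheme does not substitute for it.
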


\begin{remark}
The constant $C=C(v,b)>0$ in (\ref{contin2}) depends only on the functions $v$ and $b$.
\end{remark}

\begin{remark}
The function $|\ln |x||$ here is sufficient for our purposes, but is not essential
and can be replaced by a function growing slower as $x\to 0$ (in fact, arbitrarily slowly), with obvious modifications of the proof below. The corresponding subsequence of approximants
may then be more rarified.
\end{remark}

\begin{proof} 
Let $\al\in(0,1)$ be irrational, $\th\in [0,1)$,
$x\in \bbr$, and $\phi$ be the corresponding formal solution of the equation $(H_{v,b,\al,\th}^+ -x)\phi=0$,
normalized by the condition $\phi(0)=1$. Note that $\phi(n)\equiv\phi(n,x,\th)$ are polynomials of degree $n$ in $x$ orthonormal
w.r.t. the spectral measure $\mu_{\th}$ of $H_{v,b,\al,\th}^+$ associated with the vector 
$e_0=(1,0,0,\dots)$.
In particular,
\be\label{intcond}
\int_{\bbr} |\phi(n,x,\th)|^2 d\mu_{\th}(x)=1,\qquad n=0,1,\dots.
\ee

It is immediate from the second order recurrence that 
for a fixed open bounded interval $K$ containing the spectrum $S_{v,b,\al}$, there
exists $0<C_0<\infty$ such that for all $x\in K$ we have,
assuming $b_n(\th)\neq 0$, $n=0,1,\dots,m-1$,
\be \label{c1}
 |\phi(m,x,\th)|\le C_0^{m}\prod_{n=0}^{m-1}{\frac{1}{|b_n(\th)|}},
\ee
and

\be \label{c2}
\left|\frac{d}{dx}\phi(m,x,\th)\right|\le C_0^{m}\prod_{n=0}^{m-1}{\frac{1}{|b_n(\th)|}}.
\ee

Let $E\in S_{v,b,\al}$ and
$\chi_{E,\ep}(x)$ be the characteristic function of the interval
$(E-\ep,E+\ep) .$ Let  $u_{E,\ep}(x)$ be a continuous function such
that $\chi_{E,\ep/2}(x)\le u_{E,\ep}(x)\le \chi_{E,\ep}(x).$ Since
for any $\th\in [0,1)$ and any $\ep >0,$
$\mu_{\th}((E-\ep,E+\ep))>0,$ we have for any $\th,$
\be
\mu_{\th}((E-\ep,E+\ep))\ge(u_{E,\ep}(H_{v,b,\al,\th}^+)e_0,e_0)\ge\mu_{\th}((E-\ep/2,E+\ep/2))>0.
\ee
Since $(u_{E,\ep}(H_{v,b,\al,\th}^+)e_0,e_0)$ is a
continuous function of $\th$, we obtain that for $\ep>0,$  there exists
$f_E(\ep)>0$ such that
$\inf_{\th}\mu_{\th}((E-\ep,E+\ep))>f_E(\ep).$ Since obviously
$|E-E_0|<\ep/2$, $E_0\in S_{v,b,\al}$, implies $\mu_{\th}((E-\ep,E+\ep))>f_{E_0}(\ep/2),$
we also have, by compactness of $S_{v,b,\al}$ and considering the cover
$\cup_{E_0\in S_{v,b,\al}}(E_0-\ep/2,E_0+\ep/2)$, a positive lower bound uniform in $E$.
Thus there exists a function $f(x)>0$, $x>0$, $f(0)=0$, which is
(sufficiently slowly) strictly increasing from zero\footnote{There is always a point $E\in S_{v,b,\al}$ such that
$\mu_{\th}(\{E\})=0$ since the spectrum $S_{v,b,\al}=\si(H_{v,b,\alpha,\theta})$ has no isolated points.} such that
%is purely essential, so cannot consist of
%only finitely many points. Indeed, points of infinite multiplicity are not possible since $H$ separates into at most a finite
%direct sum of (truncated) Jacobi matrices with nonzero off-fiagonal elements, and the spectrum of a Jacobi matrix is simple in the %truncated case and at most doubly degenerate in the case of the real line.}
\be\label{f}
\liminf_{\ep\to +0}\frac{\inf_{E\in S_{v,b,\al}}\inf_{\th}\mu_{\th}((E-\ep,E+\ep))}{f(2\ep)}=+\infty.
\ee
We can assume $f(x)$ is continuous.
(Indeed, as a monotone function, it is a sum of a continuous one and a jump function;
the latter can be bounded from below by a nondecreasing continuous function.)\footnote{To define $f$, we could have used any positive constant instead of $+\infty$ on the r.h.s. of (\ref{f}). We only need existence of such $f$ in our proof. However, this suggests a question:
can one obtain explicit estimates on $f$ in terms of $\al$, $b$, and $v$?}
Then there is a function $g(x)>0$, $x>0$, $g(0)=0$, which is
(sufficiently fast) strictly increasing from zero, continuous, and such that
\be\label{g} 
f(g(x)^2)\ge x,\qquad g(x)\geq x^{1/4},\qquad x>0.
\ee
Moreover, there is a function $h(x)>0$, $x>0$, which is
(sufficiently fast) decreasing to zero as $x\to+\infty$ and such that
\be\label{h}
g(B h(x))\le\exp(-x),\qquad x>0,
\ee
where $B=\max |db(x)/dx|$.

Let
\be
\om(n)=\frac{1}{q_n q_{n+1}},
\ee
and note that by (\ref{q})
\be\label{om-ineq}
\frac{\om(n)}{2}<
\left|\al-\frac{p_n}{q_n}\right|<\om(n),\qquad n=1,2,\dots
\ee

We now choose a special value of $\th$ to ensure that some off-diagonal elements $b_n$ decrease to zero
sufficiently fast along a certain sequence $n_j$. Without loss of generality, we assume that $b(0)=0$.
Since the number of zeros of $b(\th)$ on the period is finite, there exists the largest nonnegative integer $t$ such that
$b(-t\al)=0$. (For $b(\th)=2\sin 2\pi\th$, $t=0$.) 
Pick a large $n_1$ and take $k_1$ the smallest such that $k_1\ge |\ln \om(n_1)|$. 
Let $a_1(\th)=
C_0^{-(k_1+1)}\prod_{n=0}^{k_1-1}|b_n(\th)|$ with $C_0$ from (\ref{c2}). The function
$a_1(\th)$ is continuous 
with $a_1 (-(k_1+t)\al)>0$. On the other hand, $g(|b_{k_1}(\th)|)$ is a
continuous function with $g(|b_{k_1}(-(k_1+t)\al)|)=0$.   Thus we can define
a closed interval of positive length on the circle by\footnote{As usual, $\|\th\|=\mathrm{dist}\,(\th,\mathbb{Z})$.}
$I_1\subset \{\theta:
\|\th+(k_1+t)\al\|\leq h(k_1)\}$ such that for $\th\in I_1$ we have
\be g(|b_{k_1}(\th)|)<a_1(\th)=C_0^{-(k_1+1)}\prod_{n=0}^{k_1-1}|b_n(\th)|.\ee

We proceed by induction.
Given $k_{j-1},n_{j-1},I_{j-1}$, we find a denominator $q_n >3/|I_{j-1}|,$ and then
set $n_{j}$ to be the smallest such that $\frac{1}{\om(n_{j})} > e^{q_n}$ and $\om(n_j)<\om(n_{j-1})^3$.
As follows from the inequality $|\al-p_n/q_n|<1/(q_nq_{n+1})$,
for any interval $I$
with $|I|>3/q_n,$ for every $s$, $x$, there is a
$k\in\{s,s+1,\ldots,s+q_n-1\}$ with $x-k\al\mod 1\in I.$  Thus we
can find $k_{j}\in [|\ln \om(n_{j})| , 2|\ln \om(n_{j})|]$ such that
$-(k_{j}+t)\al\mod 1\in I_{j-1}$. Note that $k_j>k_{j-1}$. 
Now, as above, define a closed interval of positive length $I_{j}\subset I_{j-1}\cap \{\th:
\|\th+(k_{j}+t)\al\|\leq h(k_{j})\}$ such that for $\th\in I_{j},$
we have
\be \label{du} g(|b_{k_j}(\th)|)<C_0^{-(k_j+1)}\prod_{n=0}^{k_j-1}|b_n(\th)|.\ee

Therefore we have a nested sequence of closed intervals $I_j$ of decreasing to zero length 
such that for $\th\in I_j$ 
\be \label{kj}
\|\th+(k_{j}+t)\al\|\leq h(k_{j}),\qquad k_j\in [|\ln \om(n_{j})| , 2|\ln \om(n_{j})|],
\ee 
and (\ref{du}) holds.
Let $\{\wt\theta\} =\cap_j I_j.$ Then $\th=\wt\theta$ satisfies
(\ref{kj}), (\ref{du})  for every $j\ge 1$. 

Fix $E\in S_{v,b,\al}$.
Define
\be
\psi_j=(1,\phi(1,E,\wt\th),\phi(2,E,\wt\th),\dots,\phi(k_j,E,\wt\th),0,0,\dots),\qquad j\ge 1,
\ee
that is the projection of the vector $(\phi(n,E,\wt\th))_{n=0}^\infty$ onto the $k_j+1$-dimensional subspace with indices $0,\dots,k_j$. 

By the Weyl criterion, there exists $E'$ in the spectrum of $H^+_{v,b,p_{n_j}/q_{n_j},\wt\th}$ such that\footnote{The value of the constant $C>0$
can be different in different formulae below.} 
\begin{align}
|E'-E|&\le \|(H^+_{v,b,p_{n_j}/q_{n_j},\wt\th}-E)\psi_j\|/\|\psi_j\|\notag\\
&\le
\|(H^+_{v,b,p_{n_j}/q_{n_j},\wt\th}-H^+_{v,b,\al,\wt\th})\psi_j\|/\|\psi_j\|+
\|(H^+_{v,b,\al,\wt\th}-E)\psi_j\|/\|\psi_j\|\notag\\
&\le
C|\al-p_{n_j}/q_{n_j}|k_j +\|H^+_{v,b,\al,\wt\th}-E)\psi_j\|/\|\psi_j\|.\label{contin1}
\end{align}

Using (\ref{H2}) and the fact that $\|\psi_j\|>\phi(0)=1$,
we obtain from here that
\be\label{EpEmed}
|E'-E|\le C|\al-p_{n_j}/q_{n_j}|k_j + |b_{k_j}|+|b_{k_j}\phi(k_j+1,E,\wt\th)|,\qquad b_k\equiv b(\wt\th+k\al).
\ee
Note that by the definitions of $\wt\th$, $b(x)$, and $t$,
\be\label{bkj-est}
|b_{k_j}|\le B \|\wt\th+(k_j+t)\al\|\le B h(k_j),\qquad j=1,2,\dots,
\ee
where $B=\max |db(x)/dx|$. Since by (\ref{h}), (\ref{kj}), (\ref{om-ineq}), 
$0\le Bh(k_j)\le \exp(-k_j)\le \om(n_j)\le 2|\al-p_{n_j}/q_{n_j}|$, 
it remains
to estimate $\phi(k_j+1,E,\wt\th)$. For that, we use the condition (\ref{intcond}).
Let
\be
\ze_j(x)=b_{k_j}\phi(k_j+1,x,\wt\th),\qquad x\in\bbr.
\ee
We have
\begin{lemma} With $g(x)$ defined in (\ref{g}), $E\in S_{v,b,\al}$,
\be\label{chiest}
|\ze_j(E)|\le g(|b_{k_j}|),\qquad j=1,2,\dots
\ee
\end{lemma}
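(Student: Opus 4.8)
The plan is to bound $|\ze_j(E)| = |b_{k_j}\phi(k_j+1, E, \wt\th)|$ by using the normalization condition \eqref{intcond} and the definition of $\wt\th$ to transfer a pointwise bound on $\phi(k_j+1, \cdot, \wt\th)$ into the desired estimate. The key observation is that if $|\ze_j(E)|$ were large, then by the derivative estimate \eqref{c2} (applied up to index $k_j+1$, using that $b_n(\wt\th)\neq 0$ for $n < k_j$ by the definitions of $t$ and $\wt\th$), $\ze_j$ would also be large — hence bounded below by, say, $|\ze_j(E)|/2$ — on an interval around $E$ whose length is controlled from below by $|\ze_j(E)|$ divided by a bound on $|\ze_j'|$. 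Since $\ze_j' = b_{k_j}\frac{d}{dx}\phi(k_j+1,x,\wt\th)$ and $|b_{k_j}|\le Bh(k_j)$ while $|\frac{d}{dx}\phi(k_j+1,x,\wt\th)|\le C_0^{k_j+1}\prod_{n=0}^{k_j-1}|b_n(\wt\th)|^{-1}$, the factor $b_{k_j}$ times the huge product $\prod 1/|b_n|$ is exactly what \eqref{du} controls: \eqref{du} says $g(|b_{k_j}|) < C_0^{-(k_j+1)}\prod_{n=0}^{k_j-1}|b_n(\wt\th)|$, equivalently $|b_{k_j}|\cdot C_0^{k_j+1}\prod_{n=0}^{k_j-1}|b_n(\wt\th)|^{-1} \le |b_{k_j}|/g(|b_{k_j}|)\cdot(\text{something}) $ — so $\sup|\ze_j'|$ is bounded in terms of $g(|b_{k_j}|)$ and $|b_{k_j}|$.

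Concretely, I would argue by contradiction: suppose $|\ze_j(E)| > g(|b_{k_j}|)$. Write $\delta = g(|b_{k_j}|)^2$ (so that, by the left inequality in \eqref{g}, $f(2\cdot(\delta/2)) = f(\delta)\ge$ large). On the interval $(E-r, E+r)$ with $r$ chosen as $|\ze_j(E)|/(2\sup_K|\ze_j'|)$, we have $|\phi(k_j+1,x,\wt\th)|\ge |\ze_j(E)|/(2|b_{k_j}|)$ (dividing through by $|b_{k_j}|$), so
\[
\int_{\bbr}|\phi(k_j+1,x,\wt\th)|^2\,d\mu_{\wt\th}(x) \ \ge\ \frac{|\ze_j(E)|^2}{4|b_{k_j}|^2}\,\mu_{\wt\th}\bigl((E-r,E+r)\bigr).
\]
Now $\sup_K|\ze_j'| \le |b_{k_j}|\,C_0^{k_j+1}\prod_{n=0}^{k_j-1}|b_n(\wt\th)|^{-1} < |b_{k_j}|/g(|b_{k_j}|)$ by \eqref{du}, hence $r > g(|b_{k_j}|)|\ze_j(E)|/(2|b_{k_j}|) > g(|b_{k_j}|)^2/(2|b_{k_j}|)$, and in particular $2r \ge g(|b_{k_j}|)^2 = \delta$ (using $|b_{k_j}| < 1$, which holds for large $j$ since $b_{k_j}\to 0$). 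Therefore $\mu_{\wt\th}((E-r,E+r)) \ge \mu_{\wt\th}((E-\delta/2, E+\delta/2)) \ge$ — by \eqref{f} — at least $f(\delta) = f(g(|b_{k_j}|)^2) \ge |b_{k_j}|$ (this last step uses the defining property of $g$ in \eqref{g}, with $x = |b_{k_j}|$, provided $j$ is large enough that $|b_{k_j}|$ is in the range where the $\liminf$ in \eqref{f} forces the ratio to exceed $1$). Combining, the integral is $\ge \frac{|\ze_j(E)|^2}{4|b_{k_j}|^2}\cdot|b_{k_j}| = \frac{|\ze_j(E)|^2}{4|b_{k_j}|} > \frac{g(|b_{k_j}|)^2}{4|b_{k_j}|}$, which by \eqref{du} (giving $g(|b_{k_j}|)^2 \gg |b_{k_j}|$ for small $|b_{k_j}|$, since $g(x)\ge x^{1/4}$ so $g(x)^2 \ge x^{1/2} \gg x$) exceeds $1$, contradicting \eqref{intcond}. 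Hence $|\ze_j(E)| \le g(|b_{k_j}|)$.

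The main obstacle is bookkeeping the quantifiers on "for $j$ large enough": the inequalities $f(g(|b_{k_j}|)^2) \ge |b_{k_j}|$ and $g(|b_{k_j}|)^2/(4|b_{k_j}|) > 1$ only hold once $|b_{k_j}|$ is small, which is guaranteed because $k_j\to\infty$ and $\|\wt\th + (k_j+t)\al\| \le h(k_j)\to 0$ forces $b_{k_j}\to 0$. One must also be careful that the estimates \eqref{c1}, \eqref{c2} genuinely apply: they require $b_n(\wt\th)\neq 0$ for $n = 0, 1, \dots, k_j$, i.e., up through index $k_j$ for the value $\phi(k_j+1,\cdot,\wt\th)$; this is where the choice of $t$ as the \emph{largest} integer with $b(-t\al) = 0$ and the construction $-(k_j+t)\al \bmod 1 \in I_{j-1}$ (so the nearest zero "upstream" of $k_j+t$ is accounted for) must be invoked to ensure no intermediate $b_n(\wt\th)$ vanishes. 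The only other subtlety is whether we should take $\delta/2$ or $\delta$ inside $\mu_{\wt\th}$ and correspondingly $f(2\ep)$ versus $f(\ep)$ in \eqref{f} — this is precisely why \eqref{f} is stated with $f(2\ep)$ in the denominator, so the factor of $2$ is already absorbed.
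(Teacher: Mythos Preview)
Your proposal is correct and follows essentially the same contradiction argument as the paper: assume $|\ze_j(E)|>g(|b_{k_j}|)$, use the derivative bound \eqref{c2} together with \eqref{du} to find an interval of length $\ge g(|b_{k_j}|)^2$ on which $|\ze_j|$ stays large, then combine the normalization \eqref{intcond} with the measure lower bound from \eqref{f}, \eqref{g} to force $g(|b_{k_j}|)^2/(4|b_{k_j}|)\le 1$, contradicting $g(x)\ge x^{1/4}$. One small bookkeeping slip: applying \eqref{c2} with $m=k_j+1$ gives $|\phi'(k_j+1,x,\wt\th)|\le C_0^{k_j+1}\prod_{n=0}^{k_j}|b_n|^{-1}$, so after multiplying by $|b_{k_j}|$ you get $\sup_K|\ze_j'|\le C_0^{k_j+1}\prod_{n=0}^{k_j-1}|b_n|^{-1}<1/g(|b_{k_j}|)$ (not $|b_{k_j}|/g(|b_{k_j}|)$); this yields $2r>g(|b_{k_j}|)^2$ directly, without the detour through $|b_{k_j}|<1$, and the rest of your argument is unaffected.
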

\begin{proof}
Suppose that $|\ze_j(E)|>g(|b_{k_j}|)$.

Then, by (\ref{c2}), $|\ze_j(x)|\ge g(|b_{k_j}|)/2$ for all $x\in (E-\ep,E+\ep)$, where $\ep>0$
is such that $(E-\ep,E+\ep)\subset K$ (expanding $K$ if necessary) and
\be
\ep\geq \frac{g(|b_{k_j}|)/2}{\sup_{x\in K}\left|\frac{d}{dx}\ze_j(x)\right|}\geq
\frac{1}{2}g(|b_{k_j}|)C_0^{-(k_j+1)}\prod_{n=0}^{k_j-1}|b_n|.
\ee
Using (\ref{du}) we obtain
\be\label{E1E2}
2\ep\geq g(|b_{k_j}|)^2.
\ee

We now apply the condition (\ref{intcond}) to $\phi(k_j+1,x,\wt\th)$:
\be\label{intcond2}
1=\frac{1}{b_{k_j}^2}\int_{\bbr}\ze^2_j(x)d\mu_{\wt\th}(x)\geq
\frac{1}{b_{k_j}^2}\int_{(E-\ep,E+\ep)}\ze^2_j(x)d\mu_{\wt\th}(x)\geq
\frac{g(|b_{k_j}|)^2}{4b_{k_j}^2}\mu_{\wt\th}((E-\ep,E+\ep)).
\ee
By the definition of $f(x)$ in (\ref{f}), and by choosing $k_1$ sufficiently large and 
$\ep$ sufficiently small (but such that (\ref{E1E2}) remains satisfied),
we have, using (\ref{E1E2}) and (\ref{g}),
\[
\mu_{\wt\th}((E-\ep,E+\ep))\ge f(2\ep)\ge f(g(|b_{k_j}|)^2)\ge |b_{k_j}|.
\]
This implies by (\ref{intcond2}) that
\[
1\ge \frac{g(|b_{k_j}|)^2}{4|b_{k_j}|},
\]
which is a contradiction as $g(x)\geq x^{1/4}$ by definition. The lemma is proved. 
\end{proof}

Substituting (\ref{bkj-est}) and (\ref{chiest}) into (\ref{EpEmed}), we obtain
\[
|E'-E|\le C|\al-p_{n_j}/q_{n_j}|k_j + B h(k_j) +  g(B h(k_j)),
\]
which implies (\ref{contin2}) by the definition of $h(x)$ in (\ref{h}),
the inclusion $k_j\in [|\ln \om(n_{j})| , 2|\ln \om(n_{j})|]$, and the
inequalities in (\ref{om-ineq}).

\end{proof}

\section{Proof of Theorems \ref{measure}, \ref{Hdimthm}, \ref{graphene}.} 
 
The proof is a slightly modified argument of \cite{L}.

Denote the $t$-dimensional Hausdorff measure of a set $A$ by
\[
\meas_t(A)= \lim_{\de\downarrow 0}\inf\left\{\sum_{m=1}^{\infty}|w_m|^t: \cup_{m=1}^{\infty} w_m \mbox{ is a $\de$-cover of $A$}\right\},
\]
where the infimum is taken over all $\de$-covers of $A$ by intervals: $A\subset\cup_{m=1}^{\infty} w_m$, where $w_m$ is an interval with $|w_m|\le\de$.  
The Hausdorff dimension of $A$ is then
\[
\dimH(A)=\inf\{t>0: \meas_t(A)<\infty\}.
\]

Let $\al\in(0,1)$ be an irrational and let $p_{n_j}/q_{n_j}$ be the corresponding sequence of periodic approximants
from Theorem \ref{continuitylemma}. By arguments of \cite{T83,ams,L} combined with Theorem \ref{continuitylemma},
\be|\si(M_{v,b,\al})|\ge \limsup_{j\to\infty} |\si(M_{v,b,p_{n_j}/q_{n_j}})|.\ee
We now show that $|\si(M_{v,b,\al})|\le \liminf_{j\to\infty} |\si(M_{v,b,p_{n_j}/q_{n_j}})|$.
By Lemma \ref{halflinelemma}, 
$\si\left(H^+_{v,b,p_{n_j}/q_{n_j},\wt\th}\right)$ is a collection of up to 
$q_{n_j}$ intervals which comprise $\si(H _{v,b,p_{n_j}/q_{n_j},\wt\th})$
plus possibly $2q_{n_j}$ isolated eigenvalues.\footnote{For simplicity, we assume that there are $2q_{n_j}$ isolated eigenvalues. If there are less,
the modification of the proof is obvious.}
Therefore we can write
\be
\si\left(H^+_{v,b,p_{n_j}/q_{n_j},\wh\th}\right)=\cup_{m=1}^{q_{n_j}}[E_1^{j,m},E_2^{j,m}]\cup\{E_3^{j,m}\}
\cup\{E_4^{j,m}\}.
\ee

By Theorem \ref{continuitylemma} and (\ref{q}), for large $j$,
\be
\si(M_{v,b,\al})\subset \cup_{m=1}^{q_{n_j}}w_m,\ee where 
\be\label{wm}
w_m=\left(E_1^{j,m}-C\frac{\ln
    q_{n_j}}{q_{n_j}^2},E_2^{j,m}+C\frac{\ln
    q_{n_j}}{q_{n_j}^2}\right)\cup_{s=3,4} \left(E_s^{j,m}-C\frac{\ln  q_{n_j}}{q_{n_j}^2},E_s^{j,m}+C\frac{\ln  q_{n_j}}{q_{n_j}^2}\right),
\ee
and therefore
\be
|\si(M_{v,b,\al})|\le 
\sum_{m=1}^{q_{n_j}}|w_m|\le 
|\si(M_{v,b,p_{n_j}/q_{n_j}})|+
C\frac{\ln  q_{n_j}}{q_{n_j}}.
\ee
Thus $|\si(M_{v,b,\al})|\le \liminf_{j\to\infty} |\si(M_{v,b,p_{n_j}/q_{n_j}})|$, which completes the proof of Theorem \ref{measure}.

In the case of the critical almost Mathieu operator, where $v(\th)=0$, $b(\th)=2\sin 2\pi\th$,
and $M_{\al}$ is given by (\ref{wtM}), in view of Theorem \ref{chiralrepresentthm},
\be
\si(M_{2\al})\subset \cup_{m=1}^{q_{n_j}}w_m,\ee where by (\ref{bound}) and (\ref{wm})
\be\label{estE}
|\si(M_{2\al})|\le 
\sum_{m=1}^{q_{n_j}}|w_m|\le 
C\frac{\ln  q_{n_j}}{q_{n_j}}.
\ee

By the H\"older inequality and (\ref{estE}), we obtain
\be
\sum_{m=1}^{q_{n_j}}|w_m|^t\le q_{n_j}^{1-t}\left(\sum_{m=1}^{q_{n_j}}|w_m|\right)^t\le C q_{n_j}^{1-2t}(\ln  q_{n_j})^t,
\ee
which tends to zero as $j\to\infty$ for any $t>1/2$. 
Therefore $\meas_t(\si(M_{2\al}))=0$ if $t>1/2$. Hence $\dimH(\si(M_{2\al}))\le 1/2$, which proves Theorem \ref{Hdimthm}.

As for Theorem \ref{graphene}, the proof closely follows the proof of
Lemma 4.4 in \cite{bhj}, with H\"older-$\frac{1}{2}$ continuity replaced by
Theorem \ref{continuitylemma} and with a modification as in the proof of
Theorem \ref{Hdimthm}. $\Box$

% \section {Appendix}
% Since $b(\theta)$ has a real zero, we have
% \begin{prop}\label{sing}(\cite{JDom}, see also Proposition 7.1 of \cite{JM})
% For $\al \in \R\setminus \Q$, and a.e. $\theta\in \T_1$,
% $\sigma_{ac}(\wt H_{\al, \theta})$ is empty.
% \end{prop}

% Thus $H_{\al, \theta}$ has zero Lyapunov exponent on the spectrum and
% empty absolutely continuous spectrum. Kotani theory that identifies
% the essential closure of the set of zero Lyapunov exponents with the
% absolutely continuous spectrum, for general ergodic Schr\"odinger
% operator has been extended to the case of non-singular Jacobi matrices
% in Theorem 5.17 of \cite{tes}. A  careful inspection of the proof of Theorem 5.17 of \cite{GTeschl} shows that it holds under a weaker requirement: $\log{(|b(\cdot)|)} \in L^1$.
% Namely, we have 
% \begin{theorem}\label{Kotani}(Kotani theory) Assume $\log{(|c(\cdot)|)} \in L^1(M)$.
% Then for a.e. $\theta\in M$, $\Sigma_{ac}(H_{c,v}(\theta))=\overline{\{\lambda:\ L_{c,v}(\lambda)=0\}}^{ess}$.
% \end{theorem}
% {\bf Proof.} The proof of Theorem 5.17 of \cite{GTeschl} works verbatim. \qed\\
% In our concrete model, $\log{(|c(\theta)|)}=\log{(2|\cos{\pi\theta}|)}\in L^1(\T_1)$, thus Theorem \ref{Kotani} applies, and combining with Propositions \ref{LE=0}, \ref{sing}, it follows that 
% $\Sigma_{\Phi}$ must be a zero measure set.
% \qed

\section*{Acknowledgements}
The work of S.J. was
partially
supported by NSF DMS-1401204. The work of I.K. was partially supported by the Leverhulme Trust
research programme grant RPG-2018-260. 
I.K. is grateful to Jean Downes and Ruedi Seiler for their hospitality at TU Berlin, where part of this work was written.

\end{document}